\numberwithin{equation}{section}
\theoremstyle{plain}
\newtheorem{thm}{Theorem}[section]
\newtheorem{prop}[thm]{Proposition}
\newtheorem{lem}[thm]{Lemma}
\newtheorem*{thm*}{Theorem}
\theoremstyle{definition}
\theoremstyle{remark}
\newtheorem*{rem}{Remark}
\newtheorem*{ex}{Example}
\renewcommand{\leq}{\leqslant}
\renewcommand{\geq}{\geqslant}
\newcommand{\mbb}[1]{\mathbb{#1}}
\newcommand{\wh}[1]{\widehat{#1}}
\newcommand{\ol}[1]{\overline{#1}}
\newcommand{\lie}[1]{{\mathfrak{#1}}}
\newcommand{\abs}[1]{\lvert #1\rvert}
\newcommand{\norm}[1]{\lVert #1\rVert}
\newcommand{\vf}[1]{\frac{\partial}{\partial #1}}
\DeclareMathOperator{\im}{Im}
\DeclareMathOperator{\End}{End}
\DeclareMathOperator{\Aut}{Aut}
\DeclareMathOperator{\Sym}{Sym}
\DeclareMathOperator{\tr}{Tr}
\subjclass[2010]{32M10, 32E10}
\title[On quotients of bounded homogeneous domains]{On quotients of bounded 
homogeneous domains by unipotent discrete groups}
\author{Christian Miebach}
\address{Univ.~Littoral C\^ote d'Opale, UR 2597, LMPA, Laboratoire de 
Math\'ematiques Pures et Appliqu\'ees Joseph Liouville, F-62100 Calais, France}
\email{christian.miebach@univ-littoral.fr}
\begin{document}

\begin{abstract}
We show that the quotient of any bounded homogeneous domain by a unipotent 
discrete group of automorphisms is holomorphically separable. Then we give a 
necessary condition for such a quotient to be Stein and prove that in some cases 
this condition is also sufficient.
\end{abstract}

\maketitle

\section{Introduction}

Given a Lie group of holomorphic transformations of a Stein space $X$, one would 
like to have a complex quotient space whose holomorphic functions are the 
invariant holomorphic functions on $X$ and which is again Stein. In the case 
that the Lie group is \emph{compact}, it is possible to average holomorphic 
functions over this group and to construct in this way an invariant-theoretic 
Stein quotient space, see~\cite{He}. On the other hand, when an infinite 
discrete group acts properly by holomorphic transformations on a Stein space, 
then the orbit space is again a complex space. However, in general there is no 
averaging method in order to construct holomorphic functions on this quotient 
space. In fact, it is not hard to find examples where the quotient space is 
compact, hence where all invariant holomorphic functions are constant.

There are numerous results in the literature where certain quotients of Stein 
manifolds by proper actions of infinite discrete groups are shown to be 
holomorphically separable or Stein. In~\cite{FabIan} the authors showed that 
quotients of the unit ball $\mbb{B}_n\subset\mbb{C}^n$ by proper 
$\mbb{Z}$-actions are Stein, which was then generalized to simply-connected 
bounded domains of holomorphy in~$\mbb{C}^2$ (\cite{MO}), to arbitrary 
bounded homogeneous domains in $\mbb{C}^n$ (\cite{Mie4}), and to 
Akhiezer-Gindikin domains (\cite{Vi}). Quotients of the unit ball and 
Akhiezer-Gindi\-kin domains by discrete groups that act cocompactly on a real 
form of these domains are shown to be Stein in~\cite[Proposition~6.4]{BS} 
and~\cite[Corollary~7]{BHH}, respectively. An analog result is shown in~\cite{L} 
for quotients of complex solvable Lie groups by discrete subgroups that act 
cocompactly on a real form having purely imaginary spectrum. Quotients of 
complex Olshanski semigroups by certain discrete groups were studied 
in~\cite{ABKr} and~\cite{Mie5}. In~\cite{Ch} actions of discrete groups on 
K\"ahler-Hadamard manifolds and their quotients are investigated. Recently, 
quotients of the unit ball by certain convex-cocompact discrete groups were 
studied from the viewpoint of complex-hyperbolic geometry in~\cite{DK}. 
In~\cite{MO2} Schottky group actions on the unit ball having Stein quotients are 
constructed. Most of these results strongly rely on Lie theory.

In this paper, we are concerned with the action of a \emph{unipotent} discrete 
group $\Gamma$ of holomorphic automorphisms of a bounded homogeneous domain. In 
order to state the main results, let $D\subset\mbb{C}^n$ be a bounded domain and 
recall that its automorphism group $\Aut(D)$ is a real Lie group that acts 
properly on $D$, see~\cite{Car}. Let $G$ be the connected component of $\Aut(D)$ 
that contains the identity. The domain $D$ is \emph{homogeneous} if $\Aut(D)$ 
and hence $G$ act transitively on it. Let us fix a base point $p_0\in D$. Its 
isotropy group $K:=G_{p_0}$ is a maximal compact subgroup of $G$ and there 
exists a decomposition $G=KR$ where $R$ is a simply-connected split solvable 
Lie group, see~\cite{Kan2} and~\cite{Vin4}. Consequently, $R$ is isomorphic to 
a semi-direct product $A\ltimes N$ where $N$ is the nilradical of $R$ and 
$A\cong(\mbb{R}^{>0})^r$.

Let $D\subset\mbb{C}^n$ be a bounded homogeneous domain and let $G=KAN$ be the 
decomposition of $G=\Aut^0(D)$ introduced above. A discrete subgroup of $G$ 
will be called \emph{unipotent} if it is conjugate to a subgroup of $N$. The 
research presented in this paper was motivated by the following result.

\begin{thm}\label{Thm:holsep}
Let $D\subset\mbb{C}^n$ be a bounded homogeneous domain and let $\Gamma$ be a 
unipotent discrete group of automorphisms of $D$. Then the complex manifold 
$D/\Gamma$ is holomorphically separable. 
\end{thm}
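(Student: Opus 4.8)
The plan is to realize $D$ concretely and then to manufacture, for any two distinct $\Gamma$-orbits in $D$, a $\Gamma$-invariant holomorphic function taking different values on them. After conjugating we may assume $\Gamma\subseteq N$. Using the realization of a bounded homogeneous domain as a Siegel domain of the second kind (Pyatetskii-Shapiro), I place $D$ in $\mbb{C}^n=\mbb{C}^m_z\times\mbb{C}^k_w$ as $D=\{(z,w):\im z-F(w,w)\in\Omega\}$, where $\Omega$ is the associated convex cone and $F$ the defining $\Omega$-positive Hermitian map, and I use the explicit action of $N$ by the unipotent affine automorphisms $g_{(a,b)}\colon(z,w)\mapsto(z+a+2iF(w,b)+iF(b,b),\,w+b)$ with $a\in\mbb{R}^m$, $b\in\mbb{C}^k$. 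Two features will be used repeatedly: every nontrivial $g_{(a,b)}$ acts on $\mbb{C}^n$ without fixed points (the $w$-component is a genuine translation, as is the $z$-component on the center), and holomorphic separability is inherited by open subsets, so it suffices to produce the separating functions on $D$ itself, several of which will in fact be restrictions of polynomials or exponentials on $\mbb{C}^n$.

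First I would organise the argument as an induction along the central series of $N$, which matches the canonical fibration of the Siegel domain over a bounded homogeneous domain of lower rank. Let $Z=Z(N)$ be the center, acting by the real translations $z\mapsto z+a$ with $a\in\mbb{R}^m$; put $\Gamma_Z=\Gamma\cap Z$, and note that discreteness of $\Gamma$ forces $\Gamma_Z$ to be a lattice in a subspace of $\mbb{R}^m$. The central directions are separated by the characters $e^{2\pi i\langle\xi,z\rangle}$ with $\xi$ in the dual lattice of $\Gamma_Z$: these are $\Gamma_Z$-invariant and exhibit $D/\Gamma_Z$ as a domain in a product of $\mbb{C}^\ast$-bundles over the lower Siegel domain $D'$, on whose quotient the induced group $\bar\Gamma=\Gamma/\Gamma_Z$ acts. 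When $\Gamma$ is abelian the situation collapses pleasantly: the commuting, fixed-point-free, unipotent affine generators span an abelian algebra of complete nilpotent vector fields, which a polynomial change of coordinates straightens simultaneously to constant fields, conjugating the $\Gamma$-action to translations by a subgroup $\Lambda\subset\mbb{C}^n$. Since $\Lambda$ then acts properly discontinuously on an open set it is automatically discrete, and its orbits are separated by the exponentials $e^{2\pi i\langle\xi,\cdot\rangle}$ adapted to $\Lambda$ together with the complex-linear functionals vanishing on $\mathrm{span}_{\mbb{R}}\Lambda$, exactly as in the tube case.

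The decisive difficulty appears when $\Gamma$ is non-abelian. Then the central characters fail to be $\Gamma$-invariant: $g_{(a,b)}$ multiplies $e^{2\pi i\langle\xi,z\rangle}$ by the factor $e^{2\pi i\langle\xi,\,2iF(w,b)+iF(b,b)\rangle}$, which depends on $w$ through $F$. Because $F$ is conjugate-linear in the parameter $b$, this automorphy factor cannot be absorbed by any holomorphic modification $z\mapsto z+h(w)$ of the center coordinate—no holomorphic quadratic untwisting exists, in sharp contrast to the tube case—so the straightening of the previous paragraph is genuinely unavailable. The remedy I would pursue is to treat the obstructing factor as the automorphy factor of a line bundle and to build the invariants as $\Gamma$-invariant theta series, the $\Omega$-positivity of $F$ guaranteeing both convergence on the bounded domain $D$ and the positivity needed for separation: after passing to $D/\Gamma_Z$ one obtains a disc-subbundle of an automorphy line bundle over the lower quotient, whose high enough powers separate the fiber directions and, by ampleness coming from the cone $\Omega$, the base directions as well.

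I expect this theta-function step to be the main obstacle. One must produce convergent $\Gamma$-invariant holomorphic functions carrying the correct automorphy and verify that they separate orbits even when the projection of $\Gamma$ to $N/Z$ is not discrete—a phenomenon that genuinely occurs for discrete $\Gamma$—so that the naive inductive appeal to separability of $D'/\bar\Gamma$ is unavailable and the central part $\Gamma_Z$ must instead be exploited to regularize the construction. The base case of the induction, where $N$ is abelian and $D$ is a tube domain, is then settled directly by the exponentials and linear functionals described above.
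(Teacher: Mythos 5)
Your proposal does not reach a proof: the case you yourself flag as ``the decisive difficulty'' --- a non-abelian $\Gamma$, where the central characters acquire a $w$-dependent automorphy factor --- is exactly the case the theorem is about (already for the unit ball, $N$ is a Heisenberg group and the interesting lattices are non-abelian), and you leave it at the level of a plan (``build $\Gamma$-invariant theta series\dots whose high enough powers separate'') without constructing the series, proving convergence, or proving separation. You also correctly note that $\Gamma/\Gamma_Z$ need not be discrete, which breaks the proposed induction along the central series; no replacement argument is supplied. A second, smaller gap: in the abelian case you assert that commuting complete unipotent affine vector fields can be simultaneously straightened to constant fields by a polynomial change of coordinates on $\mbb{C}^n$. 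This is not automatic --- the paper itself cites Winkelmann's example of a free affine-linear algebraic $\mbb{C}^2$-action on $\mbb{C}^6$ whose quotient is not Stein, so such actions are not in general conjugate to translations --- and while one can likely separate orbits inside $D$ anyway (every abelian orbit is totally real), the step as written needs justification.

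For comparison, the paper sidesteps the entire automorphy-factor problem by not insisting on invariant \emph{functions} from the start. It forms the line bundle $L=D\times_\Gamma\mbb{C}$ defined by the cocycle $\gamma\mapsto\det d\gamma(z)^{-1}$; Poincar\'e series (Pyateskii-Shapiro, Lemma~3.4.1) always produce enough holomorphic sections of $L^{\otimes k}$, $k\geq2$, to separate points of $D/\Gamma$. The only thing unipotence is needed for is to trivialize $L$: on the Siegel realization $\wh{D}$ the group $\Gamma$ acts by affine maps of Jacobian determinant $1$, so $z\mapsto\det d\varphi(z)$ is a nowhere-vanishing equivariant section, and the separating sections become genuine $\Gamma$-invariant holomorphic functions. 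If you want to salvage your approach, the ``theta series with the correct automorphy factor'' you are looking for are precisely these Poincar\'e series, and the content you are missing is the observation that for unipotent $\Gamma$ the relevant automorphy factor is a coboundary.
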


It is therefore natural to ask under which additional conditions on $\Gamma$ 
the quotient manifold $D/\Gamma$ is Stein. Suppose from now on that $\Gamma$ is 
a discrete subgroup of $N$. It is well known that the simply-connected nilpotent 
group $N$ admits a unique structure as a real-algebraic group such that its 
Zariski closed subgroups are precisely its connected Lie subgroups, 
see~\cite[Chapter~2.4.2]{Vin3}. Hence, consider the Zariski closure $N_\Gamma$ 
of $\Gamma$ in $N$. Then $N_\Gamma$ is a simply-connected nilpotent Lie group 
such that $N_\Gamma/\Gamma$ is compact.

We have the following necessary condition for $D/\Gamma$ to be Stein. Note that 
to the best of my knowledge it is not known whether this condition is also 
sufficient.

\begin{prop}\label{Prop:nec}
Let $\Gamma\subset N$ be a discrete subgroup and consider its Zariski closure 
$N_\Gamma$. If $D/\Gamma$ is Stein, then all $N_\Gamma$-orbits in $D$ are 
totally real.
\end{prop}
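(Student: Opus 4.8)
The plan is to argue by contraposition: assuming that some $N_\Gamma$-orbit fails to be totally real, I will show that $D/\Gamma$ cannot carry a strictly plurisubharmonic exhaustion, hence is not Stein.

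First I would record the geometric set-up. Since $R=AN$ acts simply transitively on $D$, the subgroup $N$—and with it $N_\Gamma$ and $\Gamma$—acts freely; as $\Gamma$ is discrete and $\Aut(D)$ acts properly, the $\Gamma$-action is free and properly discontinuous, so $D/\Gamma$ is a complex manifold and $\pi\colon D\to D/\Gamma$ a covering. For $x\in D$ the orbit $M:=N_\Gamma\cdot x$ is closed and $N_\Gamma$-equivariantly diffeomorphic to $N_\Gamma$, so that $\pi(M)\cong\Gamma\backslash N_\Gamma$ is compact by hypothesis. If $D/\Gamma$ were Stein it would carry a smooth strictly plurisubharmonic exhaustion $\varphi$, and its pull-back $u:=\varphi\circ\pi$ would be a $\Gamma$-invariant strictly plurisubharmonic function on $D$ which, because $\pi(M)$ is compact, is \emph{bounded} on every orbit $M$.

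Next I would exploit the failure of total reality. Suppose $H_x:=T_xM\cap JT_xM\neq0$, so the CR dimension $k$ of $M$ is at least one. By homogeneity the complex tangent distribution $H$ is $N_\Gamma$-invariant, hence descends to an invariant CR structure of CR dimension $k$ on the compact nilmanifold $P:=\Gamma\backslash N_\Gamma$, while $u$ descends to a smooth $\psi$ on $P$. Because $N_\Gamma$ is nilpotent it is unimodular, so $P$ carries an invariant volume $dV$, and $\int_P Xf\,dV=0$ for every left-invariant field $X$ and every $f$. Choosing a left-invariant frame $L_1,\dots,L_k$ of $H^{1,0}$, strict plurisubharmonicity of $\varphi$ forces $\rho:=\sum_j(i\partial\bar\partial u)(L_j,\overline{L_j})$ to be strictly positive at every point of $M$ (each $L_j\neq0$ lies in $T^{1,0}D$); writing $\rho>0$ for the induced function on $P$, the contradiction I am after is $\int_P\rho\,dV=0$.

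The main obstacle is precisely this vanishing, and it is here that nilpotency must enter in an essential way. In the Levi-flat case the mechanism is transparent: the complex tangent distribution integrates to a foliation by complex submanifolds, and a compact leaf $L$ would be a compact complex subvariety of dimension $k\geq1$ on which the \emph{exact} form $i\partial\bar\partial u=d(i\bar\partial u)$ satisfies $\int_L(i\partial\bar\partial u)^k=0$ by Stokes, although strict positivity gives $\int_L(i\partial\bar\partial u)^k>0$. In general, however, orbits are \emph{not} Levi-flat—the horospheres of the Siegel domain swept out by the Heisenberg group are strictly pseudoconvex—so $\rho$ acquires transverse contributions from the second fundamental form of $M$ in $D$, i.e.\ first-order normal derivatives of $u$ that the boundedness of $u|_M$ does not control. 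Expanding $(i\partial\bar\partial u)(L_j,\overline{L_j})=L_j\overline{L_j}u-(\nabla_{L_j}\overline{L_j})u$ with respect to the invariant Chern connection of the Bergman metric, the purely tangential part is a second-order left-invariant operator applied to $\psi$ and integrates to zero at once; the crux is to show that the remaining normal terms $\sum_j(\nabla_{L_j}\overline{L_j})^{\perp}u$ also integrate to zero over $(P,dV)$. I expect this to follow from an integration-by-parts identity in which unimodularity of $N_\Gamma$ rewrites these terms as divergences of invariant fields—an identity that genuinely fails for non-nilpotent orbits such as $S^3=SU(2)\cdot x$, which makes precise why the hypothesis that $\Gamma$ be unipotent is indispensable.
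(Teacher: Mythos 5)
Your contrapositive strategy has a genuine gap, and it sits exactly where you place it yourself: the claimed vanishing of $\int_P\rho\,dV$. The tangential part of the Levi form does integrate to zero by unimodularity, but the transverse contribution $\sum_j(\nabla_{L_j}\overline{L_j})^{\perp}u$ is a first-order derivative of $u$ in directions normal to the orbit, and nothing you assume about $u$ --- $\Gamma$-invariance, strict plurisubharmonicity, boundedness on $M$ --- controls it. In fact the identity you hope for is false under exactly the hypotheses you use. Take $D=\wh{\mbb{B}}_2$, let $\Gamma$ be cocompact in the Heisenberg group $N$, let $M$ be a horosphere $\{\im(z)-\norm{w}^2=c\}$, and set $u=-\log\bigl(\im(z)-\norm{w}^2\bigr)$: this is smooth, strictly plurisubharmonic, $N$-invariant (hence $\Gamma$-invariant and constant, so bounded, on $M$), its tangential Hessian along $M$ vanishes identically, and yet $\rho$ is a positive constant, so $\int_P\rho\,dV>0$ and the entire Levi form is carried by the normal term you need to kill. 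More structurally: your argument never uses that $u$ is an exhaustion modulo $\Gamma$, only that it is strictly plurisubharmonic, $\Gamma$-invariant and bounded on $M$. But the second example of Section~3 of the paper produces a \emph{Stein} space $Y$ containing $\wh{\mbb{B}}_2/\Gamma$ (and therefore compact quotients of non--totally-real horospheres) as an open subset away from the singularity; pulling back a strictly plurisubharmonic exhaustion of $Y$ yields a function satisfying every hypothesis you actually invoke, so your integral identity would prove $Y$ non-Stein, which is false. The exhaustion property must therefore enter in an essential way, and your scheme has no mechanism for it.

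The paper uses the exhaustion precisely to locate a distinguished orbit rather than to treat all orbits at once. One first averages the strictly plurisubharmonic exhaustion over the compact fibers of $N_\Gamma\to N_\Gamma/\Gamma$ to make it $N_\Gamma$-invariant and exhaustive modulo $N_\Gamma$ (\cite[Lemme~2.1]{L}); an orbit in its minimum set has vanishing first-order normal derivatives, and the Harvey--Wells theorem \cite{HW} then forces that \emph{one} orbit to be totally real. Passing from one totally real orbit to all of them is a second, separate step carried out algebraically rather than analytically: total reality of $N_\Gamma\cdot z$ is equivalent to triviality of the isotropy $(N_\Gamma^{\mbb{C}})_z$ (Lemma~\ref{Lem:free}), the orbits of the unipotent algebraic group $N_\Gamma^{\mbb{C}}$ are closed in $\mbb{C}^n$, and the minimum-orbit argument is rerun on the Stein slice $\wh{D}\cap(N_\Gamma^{\mbb{C}}\cdot z)$ to kill the isotropy at an arbitrary point. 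Both ingredients --- the critical-orbit localization and the algebraic propagation --- are absent from your proposal, and without them the contraposition cannot be closed.
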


Since $R\cong A\ltimes N$ acts simply transitively on $D$, we can identify its 
Lie algebra $\lie{r}=\lie{a}\oplus\lie{n}$ with $T_{p_0}D=\mbb{C}^n$ and thus 
obtain a complex structure $j\in\End(\lie{r})$. Let $N_\Gamma$ be the 
Zariski closure of a discrete subgroup $\Gamma\subset N$ and let 
$\lie{n}_\Gamma$ be its Lie algebra. The orbit $N_\Gamma\cdot p_0$ is totally 
real if and only if $\lie{n}_\Gamma\cap j(\lie{n}_\Gamma)=\{0\}$, in which case 
we call $\lie{n}_\Gamma$ a \emph{totally real subalgebra} of $\lie{n}$. 
If $\lie{n}_\Gamma$ is totally real and if the real dimension of 
$\lie{n}_\Gamma$ coincides with the complex dimension of $D$, then we say 
that $\lie{n}_\Gamma$ is a \emph{maximal totally real subalgebra} of $\lie{n}$. 
We have the following sufficient criterion for $D/\Gamma$ to be Stein.

\begin{prop}\label{Prop:suff}
Let $\Gamma\subset N$ be a discrete subgroup and consider its Zariski closure 
$N_\Gamma$. If $\lie{n}_\Gamma$ is contained in a maximal totally real 
subalgebra of $\lie{n}$, then $D/\Gamma$ is Stein.
\end{prop}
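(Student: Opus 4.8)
The plan is to deduce the Steinness of $D/\Gamma$ from the existence of a strictly plurisubharmonic exhaustion, obtained by realising $D/\Gamma$ as an open, locally Stein subset of a Stein manifold arising as a quotient of the complexification of $M:=\exp(\lie{m})$. First I would set up the geometry. Let $\lie{m}$ be a maximal totally real subalgebra of $\lie{n}$ with $\lie{n}_\Gamma\subset\lie{m}$, and put $M:=\exp(\lie{m})\subset N$; since $\lie{n}_\Gamma\subset\lie{m}$ we have $\Gamma\subset N_\Gamma\subset M$, and because $\lie{m}$ is totally real of real dimension $n=\dim_{\mbb{C}}D$ the orbit $M\cdot p_0$ is a closed totally real submanifold of $D$ of maximal dimension. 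As $M$ is a simply-connected nilpotent Lie group it possesses a universal complexification $M_{\mbb{C}}$, a simply-connected complex nilpotent Lie group of complex dimension $n$ into which $M$ embeds as a closed totally real form; note that $\dim_{\mbb{R}}M_{\mbb{C}}=2n=\dim_{\mbb{R}}D$.

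The technical heart of the argument is to construct an $M$-equivariant biholomorphism of $D$ onto an $M$-invariant domain $\Omega\subset M_{\mbb{C}}$, where $M$ acts on $M_{\mbb{C}}$ by left translations. I would build this by extending the totally real embedding $M\hookrightarrow D$, $m\mapsto m\cdot p_0$, to the complexification. Using the integrable complex structure $j$ and the decomposition $\lie{r}=\lie{m}\oplus j(\lie{m})$ (valid since $\lie{m}\cap j(\lie{m})=\{0\}$ and the dimensions match), the differential at $p_0$ is the $\mbb{C}$-linear isomorphism $\lie{m}_{\mbb{C}}=\lie{m}\oplus i\lie{m}\to T_{p_0}D$, $X+iY\mapsto X+jY$. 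The universal property of $M_{\mbb{C}}$, together with $M$-equivariance and analytic continuation, then produces a holomorphic map that is a local biholomorphism at $p_0$; equivariance propagates this, and the simple connectivity and nilpotency of the groups involved should force it to be a global biholomorphism $D\xrightarrow{\sim}\Omega$. Under this identification the deck action of $\Gamma$ on $D$ corresponds to left translation by $\Gamma\subset M\subset M_{\mbb{C}}$, so that $D/\Gamma\cong\Omega/\Gamma\subset M_{\mbb{C}}/\Gamma$.

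Next I would establish that $M_{\mbb{C}}/\Gamma$ is Stein. Here $\Gamma$ is a discrete subgroup of the totally real form $M$ on which it acts cocompactly inside $N_\Gamma$, and the adjoint action of the nilpotent group has trivial spectrum; thus this is the nilpotent case of the result in \cite{L}, and in exponential coordinates it reduces to the model $\mbb{C}^n/\mbb{Z}^k\cong(\mbb{C}^*)^k\times\mbb{C}^{n-k}$. It then remains to verify that $\Omega/\Gamma$ is locally Stein in $M_{\mbb{C}}/\Gamma$; since the covering $M_{\mbb{C}}\to M_{\mbb{C}}/\Gamma$ is locally biholomorphic, this is equivalent to $\Omega$ being pseudoconvex in $M_{\mbb{C}}$. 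Granting this, the solution of the Levi problem on Stein manifolds (Docquier--Grauert) shows that the open subset $\Omega/\Gamma$ of the Stein manifold $M_{\mbb{C}}/\Gamma$ is itself Stein, and therefore so is $D/\Gamma$.

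The main obstacle lies entirely in the middle step and is twofold: proving that the equivariant extension is globally defined and injective, rather than merely a local biholomorphism near $p_0$, and proving that its image $\Omega$ is pseudoconvex in $M_{\mbb{C}}$. For the former I expect to exploit that $\exp$ is a global diffeomorphism for the simply-connected nilpotent groups $M$ and $M_{\mbb{C}}$, so that the equivariant map can be followed through explicit coordinates of the second kind. For the latter I would use that $\Omega\cong D$ is Stein together with the $M$-invariance of $\Omega$, reducing the pseudoconvexity of $\partial\Omega$ to a statement transverse to the totally real orbits, i.e.\ along the fibres of the splitting of $M_{\mbb{C}}$ into $M$ and the directions exponentiating $i\lie{m}$. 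If a clean proof of pseudoconvexity proves elusive, an alternative is to bypass Docquier--Grauert and instead assemble the required $\Gamma$-invariant strictly plurisubharmonic exhaustion of $D$ directly, by adding to the pullback of an exhaustion of $M_{\mbb{C}}/\Gamma$ a boundary term built from the distance to $\partial\Omega$ measured in a left-$M$-invariant, hence $\Gamma$-invariant, metric.
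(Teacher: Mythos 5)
Your endgame (Loeb's theorem for $N_\Gamma^{\mbb{C}}/\Gamma$, triviality of the principal bundle over a contractible Stein base, and Docquier--Grauert applied to a locally Stein subdomain of a Stein quotient) is exactly the paper's, and that part is fine. The genuine gap is the step you yourself flag as the ``technical heart'': producing a \emph{global} $M$-equivariant biholomorphism of $D$ onto a domain $\Omega$ in the complexification $M_{\mbb{C}}$. As you set it up --- abstract universal complexification plus analytic continuation of a germ of a map at $p_0$ --- there is no reason for the extension to be globally defined, injective, or to have Hausdorff globalization; this is precisely the hard point of Palais's theory of local actions, and neither simple connectivity nor nilpotency of the groups resolves it by itself. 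The paper closes this gap not by continuation but by changing the model: it passes to the Siegel-domain realization $\wh{D}\subset\mbb{C}^n$, on which $N$ and hence $N^{\mbb{C}}$ act \emph{algebraically} by affine transformations of all of $\mbb{C}^n$ with Jacobian $1$. Then the orbit $\wh{N}_\Gamma^{\mbb{C}}\cdot p_0$ of the complexified maximal totally real subgroup is closed (unipotent algebraic orbits are closed), and Lemma~\ref{Lem:free} shows it is open with trivial isotropy because $\wh{N}_\Gamma\cdot p_0$ is maximally totally real; hence $\wh{N}_\Gamma^{\mbb{C}}$ acts freely and transitively on $\mbb{C}^n$, and your $\Omega$ is simply $\wh{D}$ sitting inside $\wh{N}_\Gamma^{\mbb{C}}\cong\mbb{C}^n$. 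No continuation argument is needed, and the globalization is automatically $\mbb{C}^n$ itself.

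Your second worry, pseudoconvexity of $\Omega$ in $M_{\mbb{C}}$, is in fact a non-issue once the global identification is in hand: $\Omega\cong\wh{D}$ is Stein, and a Stein open subset of a Stein manifold is locally Stein (intersect with small Stein coordinate neighborhoods of boundary points); local Steinness then descends through the covering $M_{\mbb{C}}\to M_{\mbb{C}}/\Gamma$. So the proposed fallback of building an exhaustion by hand is unnecessary. One further small caution: your claim that Loeb's theorem ``reduces in exponential coordinates to $\mbb{C}^n/\mbb{Z}^k$'' is too quick when $\Gamma$ is nonabelian; the correct statement, as used in the paper, is that $N_\Gamma^{\mbb{C}}/\Gamma$ is Stein for $N_\Gamma$ the real Zariski closure of $\Gamma$, and one then splits off the $\mbb{C}^m$ factor via the trivial principal bundle $\mbb{C}^n\to\mbb{C}^n/N_\Gamma^{\mbb{C}}$.
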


The methods and results described above allow us to prove the main result of 
this paper, which in particular answers the question raised 
in~\cite[Remark~7.6]{Ch} in the negative.

\begin{thm}\label{Thm:main}
Let $D$ be the unit ball or the Lie ball and let $\Gamma$ be a unipotent 
discrete group of automorphisms of $D$ having Zariski closure $N_\Gamma \subset 
N$. Then $D/\Gamma$ is Stein if and only if $\lie{n}_\Gamma$ is totally real.
\end{thm}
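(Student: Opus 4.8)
The plan is to prove Theorem~\ref{Thm:main} by combining the necessary condition of Proposition~\ref{Prop:nec} with the sufficient condition of Proposition~\ref{Prop:suff}. The necessity direction is immediate: if $D/\Gamma$ is Stein, then Proposition~\ref{Prop:nec} forces all $N_\Gamma$-orbits in $D$ to be totally real, and in particular the orbit $N_\Gamma\cdot p_0$ is totally real, which by the definition recalled in the excerpt means precisely that $\lie{n}_\Gamma$ is a totally real subalgebra. For the sufficiency direction, by Proposition~\ref{Prop:suff} it suffices to show that every totally real subalgebra $\lie{n}_\Gamma$ of $\lie{n}$ is contained in a \emph{maximal} totally real subalgebra, i.e.\ one whose real dimension equals the complex dimension $n$ of $D$. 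Thus the entire content of the theorem, beyond the two propositions, is reduced to a purely Lie-algebraic extension statement about the specific nilpotent algebras $\lie{n}$ attached to the unit ball and the Lie ball.

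\begin{proof}[Proof plan]
Granting the two propositions, I would first dispose of necessity: if $D/\Gamma$ is Stein then by Proposition~\ref{Prop:nec} all $N_\Gamma$-orbits, in particular $N_\Gamma\cdot p_0$, are totally real, whence $\lie{n}_\Gamma$ is a totally real subalgebra by definition. For sufficiency I would reduce, via Proposition~\ref{Prop:suff}, to the following claim: for $D$ the unit ball or the Lie ball, every totally real subalgebra of $\lie{n}$ embeds into a maximal totally real subalgebra. To prove this claim I would write down the explicit split solvable model $\lie{r}=\lie{a}\oplus\lie{n}$ together with the complex structure $j$ coming from the identification $\lie{r}\cong T_{p_0}D$, and exhibit, for each of the two domains, a concrete maximal totally real subalgebra $\lie{m}\subset\lie{n}$ with $\dim_{\mbb{R}}\lie{m}=n$ and $\lie{m}\cap j(\lie{m})=\{0\}$. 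The main work is then to show that an arbitrary totally real subalgebra can be conjugated (by an inner automorphism coming from $N$, which does not change whether the quotient is Stein) into such an $\lie{m}$, or at least into a maximal totally real subalgebra.
\end{proof}

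The hard part will be this last extension step, and it is exactly here that the hypothesis of working with the unit ball or the Lie ball---rather than a general bounded homogeneous domain---is essential. For a general homogeneous domain there is no reason that every totally real subalgebra sits inside a maximal one, and indeed the remark preceding Proposition~\ref{Prop:nec} flags that the totally-real condition is not known to be sufficient in general; so the argument must use structural features particular to these two domains. I would exploit the fact that for the unit ball the nilradical $\lie{n}$ is a Heisenberg algebra, and for the Lie ball it is likewise of low step (a two-step nilpotent algebra of Heisenberg type), so that the $j$-eigenspace decomposition of $\lie{n}$ and the bracket relations are completely explicit. Concretely, I would analyse how $j$ interacts with the center and the descending central series of $\lie{n}$, show that a totally real subalgebra meets the center and the quotient in complementary-isotropic pieces for the form induced by $j$, and then extend each piece to a Lagrangian-type complement using the non-degeneracy of the relevant symplectic/Hermitian form. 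The delicate point is ensuring that the extension remains a \emph{subalgebra} (closed under the bracket), not merely a totally real linear subspace of the right dimension; in the Heisenberg and Heisenberg-type settings this is controlled by the fact that brackets land in the one-dimensional, respectively low-dimensional, center, so a totally real complement can be built so that all new brackets vanish or stay within the already-chosen central directions. Verifying this compatibility for the Lie ball, whose nilradical is more intricate than the ball's Heisenberg algebra, is where I expect the bulk of the computation and the real obstacle to lie.
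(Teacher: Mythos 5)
Your necessity argument and your treatment of the unit ball match the paper: for $\mbb{B}_n$ the nilradical is a Heisenberg algebra and the paper's Proposition~\ref{Prop:completion} carries out exactly the Lagrangian-extension argument you sketch, so that Proposition~\ref{Prop:suff} applies. The gap is in the Lie ball case. Your entire sufficiency strategy rests on the claim that every totally real subalgebra of $\lie{n}$ is contained in a maximal totally real subalgebra, and for the Lie ball this claim is \emph{false}: the paper exhibits the explicit counterexample $\mbb{R}\xi'_1\subset\lie{n}$ in $\lie{l}_3$, where any totally real subalgebra of maximal dimension containing $\xi'_1$ would be forced (by iterated brackets with an element having a nonzero $\eta$-component) to contain both $\zeta$ and $\xi_1=-j(\xi'_1)$, a contradiction. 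So the obstacle you flag at the end is not merely a delicate computation to be carried out; it is an actual obstruction, and the reduction to Proposition~\ref{Prop:suff} cannot cover all cases.

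The paper circumvents this by proving only a partial extension result (Proposition~\ref{Prop:completion2}): a totally real subalgebra \emph{not} contained in the ideal $\lie{n}_{n-1}=\lie{b}_{n-1}\cap\lie{n}$ does extend to a maximal one, and in that case Proposition~\ref{Prop:suff} applies. For the remaining case, where $\lie{n}_\Gamma$ lies in the $j$-ideal $\lie{b}_{n-1}$, a genuinely different argument is needed: one extends $\lie{n}_\Gamma$ to a maximal totally real subalgebra \emph{of $\lie{b}_{n-1}$} (using the unit-ball result, since $\lie{b}_{n-1}$ is the normal $j$-algebra of $\mbb{B}_{n-1}$), then uses the equivariant holomorphic submersion $\wh{\mbb{L}}_n\to\mbb{H}^+$ with unit-ball fibers to realize $\wh{N}_\Gamma^\mbb{C}\cdot\wh{\mbb{L}}_n$ as a holomorphically trivial $\wh{N}_\Gamma^\mbb{C}$-principal bundle over the contractible base $\mbb{H}^+$, concludes Steinness of the quotient from Loeb's theorem, and descends to $\wh{\mbb{L}}_n/\Gamma$ via Docquier--Grauert. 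Your proposal is missing both the case distinction and this fibration argument, so as written it does not yield the Lie ball half of the theorem.
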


The proof of Theorem~\ref{Thm:main} relies in part on the fact that for the 
unit ball and the Lie ball the necessary condition given in 
Proposition~\ref{Prop:nec} is indeed sufficient as well.

Let us outline the structure of this paper. In Section~\ref{Sect:gen} we review 
some parts of the structure theory of bounded homogeneous domains and prove 
Theorem~\ref{Thm:holsep} as well as Propositions~\ref{Prop:nec} 
and~\ref{Prop:suff}. Sections~\ref{Sect:unit} and~\ref{Sect:Lie} contain the 
proves of Theorem~\ref{Thm:main} for the unit ball and the Lie ball, 
respectively. In the last section~\ref{Sect:Siegel} we present an example that 
shows that Theorem~\ref{Thm:main} does \emph{not} hold true for arbitrary 
bounded homogeneous domains.

\subsection*{Acknowledgments}

I would like to thank Karl Oeljeklaus for helpful discussions. Financial support 
by the ANR-project QuaSiDy (ANR-21-CE40-0016-01) is gratefully acknowledged.

\section{Results for arbitrary bounded homogeneous domains}\label{Sect:gen}

We begin by reviewing the structure of the automorphism group of a bounded 
homogeneous domain, its unbounded realization as a Siegel domain, and its 
associated normal $j$-algebra. As main references we refer the reader 
to~\cite{Kan2} and~\cite{Pya}. We then show that the quotient of a bounded 
homogeneous domain by a unipotent discrete group $\Gamma$ of automorphisms is 
holomorphically separable, see Theorem~\ref{Thm:holsep}, and prove 
Propositions~\ref{Prop:nec} and~\ref{Prop:suff}.

\subsection{Bounded homogeneous domains and normal $j$-algebras}

Let $D\subset\mbb{C}^n$ be a bounded homogeneous domain with base point $p_0\in 
D$ and consider the decomposition $G=KR$ where $G=\Aut^0(D)$ and $K=G_{p_0}$. 
The elements of the Lie algebra $\lie{g}$ will be viewed as complete 
holomorphic vector fields on $D$. Since the group $R$ is split solvable, we 
have $R\cong A\ltimes N$ where $N$ is the nilradical of $R$. Moreover, the 
adjoint representation of the Abelian group $A$ on $\lie{n}$ is diagonalizable 
over $\mbb{R}$, compare~\cite[Proposition~2.8]{Kan2}.

It is well known that every bounded homogeneous domain $D\subset\mbb{C}^n$ is 
biholomorphic to a Siegel domain of the second kind $\wh{D}\subset\mbb{C}^n$, 
see~\cite{VGP}.

\begin{ex}
The unit ball $\mbb{B}_n\subset\mbb{C}^n$ is biholomorphically equivalent to 
\begin{equation*}
\wh{\mbb{B}}_n=\bigl\{(z,w)\in\mbb{C}\times\mbb{C}^{n-1};\ \im(z)-\norm{w}^2>0 
\bigr\},
\end{equation*}
the Lie ball $\mbb{L}_n\subset\mbb{C}^n$ is biholomorphic to
\begin{equation*}
\wh{\mbb{L}}_n=\bigl\{z\in\mbb{C}^n;\ \im(z_n)^2-\bigl(\im(z_1)^2+\dotsb+ 
\im(z_{n-1})^2\bigr)>0,\ \im(z_n)>0\bigr\},
\end{equation*}
and the Siegel disk $\mbb{S}_n\subset\Sym(n,\mbb{C})$ may be realized as 
Siegel's upper half-plane
\begin{equation*}
\wh{\mbb{S}}_n=\bigl\{Z\in\Sym(n,\mbb{C});\ \im(Z)\text{ is positive 
definite}\bigr\}.
\end{equation*}
In Helgason's notation, see~\cite[Table~V, Chapter~X]{H}, the unit ball, the Lie 
ball and the Siegel disk correspond to the Hermitian symmetric spaces of types 
$AIII(p=1,q=n)$, $BDI(p=2,q=n)$ and $CI$, respectively.
\end{ex}

The split-solvable group $R\cong A\ltimes N$ acts by affine transformations on 
$\wh{D}$. In particular, this $R$-action extends to the whole of $\mbb{C}^n$. 
It follows from the explicit realization of its affine automorphism group, see 
e.g.~\cite[Chapters~2 and~3]{Kan2}, that $N$ and hence $N^\mbb{C}$ act 
\emph{algebraically} by affine transformations on $\mbb{C}^n$ and that every 
transformation in $N^\mbb{C}$ has Jacobi determinant equal to $1$.

Since $R$ acts simply transitively on $D$, we may identify its Lie algebra 
$\lie{r}=\lie{a}\oplus\lie{n}$ with $T_{p_0}D=\mbb{C}^n$ and thus obtain an 
integrable complex structure $j\in\End(\lie{r})$.

In order to show that $(\lie{r},j)$ is a normal $j$-algebra we need to prove the
existence of a linear form $\lambda\in\lie{r}^*$ such that for all 
$x,y\in\lie{r}$ we have
\begin{equation*}
\lambda\bigl[j(x),j(y)\bigr]=\lambda[x,y]\quad\text{and}\quad
\lambda\bigl[j(x),x\bigr]>0\text{ if }x\not=0,
\end{equation*}
see part (III) of the definition in~\cite[p.~51]{Pya}.

In~\cite{Kan2} and~\cite{Pya} existence of this linear form 
$\lambda\in\lie{r}^*$ is deduced from~\cite{Kos}. Let us explain here how 
$\lambda$ can be constructed from the Bergman metric of $D$ via a moment map. 
(Recall that the Bergman metric of a bounded domain $D\subset\mbb{C}^n$ is an 
$\Aut(D)$-invariant K\"ahler metric on $D$. For its definition we refer the 
reader to~\cite[Chapter~4.10]{Kob}.) It is shown in~\cite{Is} that there exists 
an $R$-equivariant holomorphic embedding of $D$ into Siegel's upper half-plane 
$\wh{\mbb{S}}_n$. Since the domain $\mbb{S}_n$ is symmetric, the group 
$\Aut^0(\mbb{S}_n)$ is semisimple and thus there exists an equivariant moment 
map for the action of $\Aut^0(\mbb{S}_n)$ on $\mbb{S}_n$. Pulling back this 
moment map to $D$ we obtain an $R$-equivariant moment map $\mu\colon 
D\to\lie{r}^*$ with respect to the Bergman metric $\omega$ of $D$. Note that 
$\mu$ is a diffeomorphism onto its image, an open coadjoint orbit. Moreover, 
$\mu$ is a symplectomorphism with respect to the Kostant-Kirillov form on this 
coadjoint orbit, i.e., we have
\begin{equation}\label{Eqn:moment}
\omega_{p_0}\bigl(x(p_0),y(p_0)\bigr)=\mu(p_0)[x,y]
\end{equation}
for all $x,y\in\lie{r}$, see~\cite[Equations~(26.2) and~(26.8)]{GS}. As a 
consequence, we can take $\lambda:=\mu(p_0)\in\lie{r}^*$ in the definition 
of the normal $j$-algebra associated with $D$. For the structure theory of 
normal $j$-algebras we refer the reader to~\cite[Chapter~2]{Pya}.

\begin{rem}
For a connected closed subgroup $R'$ of $R$ the orbit $R'\cdot p_0$ is totally 
real if and only if $\lie{r}'\cap j(\lie{r}')=\{0\}$. In this case, we refer to 
$\lie{r}'$ as a totally real subalgebra of $\lie{r}$. Note that every isotropic 
orbit (with respect to the Bergman metric) is totally real. Moreover, every 
orbit of an Abelian subgroup of $R$ is totally real since $\mu(p_0)\bigl[j(x), 
x\bigr]>0$ holds for $x\not=0$. This last statement is also a consequence of 
the fact that $D$ is Kobayashi-hyperbolic, since an Abelian group of 
automorphisms that is not totally real would yield a non-constant holomorphic 
map from $\mbb{C}$ to $D$.
\end{rem}

Conversely, every abstract normal $j$-algebra is the normal $j$-algebra 
associated with some bounded homogeneous domain, see~\cite[Appendix]{Pya}.

\subsection{Quotients by unipotent discrete groups are holomorphically 
separable}

Let $D\subset\mbb{C}^n$ be a bounded homogeneous domain and let $\Gamma$ be a 
unipotent subgroup of $G=\Aut^0(D)$.

\begin{proof}[Proof of Theorem~\ref{Thm:holsep}]
Any unipotent discrete subgroup $\Gamma\subset G$ acts freely and properly on 
$D\times\mbb{C}$ by
\begin{equation*}
\gamma\cdot(z,t):=\bigl(\gamma(z),t\det d\gamma(z)^{-1}\bigr),
\end{equation*}
so that we have the quotient manifold $L:=D\times_\Gamma\mbb{C} 
:=(D\times\mbb{C})/\Gamma$. Moreover, the projection onto the first factor 
$D\times\mbb{C}\to D$ is $\Gamma$-equivariant and the induced map 
$L=D\times_\Gamma\mbb{C}\to D/\Gamma$ defines a holomorphic line bundle on 
$D/\Gamma$. A holomorphic section in $L$ corresponds to a $\Gamma$-equivariant 
holomorphic function from $D$ to $D\times\mbb{C}$. Holomorphic sections 
in the $k$-fold tensor product $L^{\otimes k}$ can be constructed via 
Poincar\'e series for $k\geq2$ and separate the points of $D/\Gamma$ in the 
following sense. For every pair of elements $p,q\in D/\Gamma$ with $p\not=q$ 
there exist $k\geq2$ (depending on $p$ and $q$) and a holomorphic section $s$ 
in $L^{\otimes k}$ such that $s(p)=0$ and $s(q)\not=0$, 
see~\cite[Lemma~3.4.1]{Pya}. We will finish the proof by showing that for a 
unipotent discrete subgroup $\Gamma$ the line bundle $L$ admits a non-vanishing 
holomorphic section. It then follows that $L$, and thus all of its powers, 
are holomorphically trivial.

In order to do so, let $\varphi\colon D\to\wh{D}$ be a $\Gamma$-equivariant 
biholomorphic map to a Siegel domain of the second kind on which $\Gamma$ acts 
by affine transformations having Jacobi determinant $1$. In other words, we 
have $\varphi\circ\gamma=\wh{\gamma}\circ\varphi$ where $\det 
d\wh{\gamma}(z)=1$ for all $z\in\wh{D}$. Then, the chain rule implies
\begin{align*}
\det d\varphi\bigl(\gamma(z)\bigr)&=\det d\gamma(z)^{-1}\cdot \det 
d(\varphi\circ \gamma)(z)\\
&=\det d\gamma(z)^{-1}\cdot\det d(\wh{\gamma}\circ\varphi)(z)=\det 
d\gamma(z)^{-1}\cdot\det d\varphi(z).
\end{align*}
Consequently, the holomorphic map $s\colon D\to D\times\mbb{C}^*$ given by 
$s(z,t)=\bigl(z,\det d\varphi(z)\bigr)$ is $\Gamma$-equivariant and thus 
defines a non-vanishing holomorphic section in $L$, as desired.
\end{proof}

\subsection{A necessary condition for Steinness}

In this subsection we prove Proposition~\ref{Prop:nec}.

An important ingredient for the proof is the following result of Loeb. Let 
$\Gamma\subset G$ be a discrete group of unipotent automorphisms of $D$. After 
conjugation we may suppose that $\Gamma$ is contained in $N$. Let $N_\Gamma$ be 
the real Zariski closure of $\Gamma$ in $N$ and let $N_\Gamma^\mbb{C}$ be its 
universal complexification. Then the complex homogeneous space 
$N_\Gamma^\mbb{C}/\Gamma$ is Stein, see~\cite{GH} and~\cite[Th\'eor\`eme~1]{L}.

Recall that the complexification $N^\mbb{C}$ is a unipotent complex algebraic 
group and its action on the Siegel domain $\wh{D}$ extends to an algebraic 
action on $\mbb{C}^n$. In particular, the orbits of any algebraic subgroup of 
$N^\mbb{C}$ are closed in $\mbb{C}^n$, see~\cite[Proposition~4.10]{Bo}.

\begin{lem}\label{Lem:free}
The orbit $N_\Gamma\cdot z$ is totally real in $\wh{D}$ if and only if the 
isotropy group $(N_\Gamma^\mbb{C})_z$ is trivial.
\end{lem}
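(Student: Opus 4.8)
The plan is to reduce both conditions to linear algebra on the infinitesimal action at $z$, and then to exploit unipotence to pass between the group-level statement (triviality of the isotropy) and its infinitesimal counterpart.

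First I would introduce the orbit map $\mu_z\colon N_\Gamma^\mbb{C}\to\mbb{C}^n$, $g\mapsto g\cdot z$, and its differential at the identity $\Phi_z:=d\mu_z(e)\colon\lie{n}_\Gamma^\mbb{C}\to T_z\mbb{C}^n=\mbb{C}^n$, $\zeta\mapsto\zeta\cdot z$. Since the $N_\Gamma^\mbb{C}$-action on $\mbb{C}^n$ is holomorphic, $\mu_z$ is holomorphic and hence $\Phi_z$ is $\mbb{C}$-linear; writing $\lie{n}_\Gamma^\mbb{C}=\lie{n}_\Gamma\oplus i\lie{n}_\Gamma$ this means $\Phi_z(iX)=i\,\Phi_z(X)$ for $X\in\lie{n}_\Gamma$. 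Set $W:=\Phi_z(\lie{n}_\Gamma)$, which is the tangent space $T_z(N_\Gamma\cdot z)$ to the real orbit; then $\mbb{C}$-linearity gives $T_z(N_\Gamma^\mbb{C}\cdot z)=\Phi_z(\lie{n}_\Gamma^\mbb{C})=W+iW$. Because $N_\Gamma\subset N\subset R$ and $R$ acts simply transitively, hence freely, on $\wh{D}$, the $N_\Gamma$-action is free, so $\Phi_z$ restricted to $\lie{n}_\Gamma$ is injective and $\dim_\mbb{R}W=\dim_\mbb{R}\lie{n}_\Gamma=\dim_\mbb{C}N_\Gamma^\mbb{C}$. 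By $N_\Gamma$-homogeneity of the orbit, which consists of biholomorphisms, $N_\Gamma\cdot z$ is totally real if and only if $W\cap iW=\{0\}$.

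Next I would translate the isotropy condition into a dimension count. The orbit $N_\Gamma^\mbb{C}\cdot z$ is a smooth locally closed subvariety with tangent space $W+iW$ at $z$, so the orbit--stabilizer formula yields $\dim_\mbb{C}(W+iW)=\dim_\mbb{C}N_\Gamma^\mbb{C}-\dim_\mbb{C}(N_\Gamma^\mbb{C})_z$. Combined with $\dim_\mbb{C}N_\Gamma^\mbb{C}=\dim_\mbb{R}W$ this gives $\dim_\mbb{C}(N_\Gamma^\mbb{C})_z=\dim_\mbb{R}W-\dim_\mbb{C}(W+iW)$. On the other hand, since $W+iW$ is a complex subspace, the elementary identity $\dim_\mbb{C}(W+iW)=\dim_\mbb{R}W-\tfrac12\dim_\mbb{R}(W\cap iW)$ holds. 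Substituting, $\dim_\mbb{C}(N_\Gamma^\mbb{C})_z=\tfrac12\dim_\mbb{R}(W\cap iW)$, so the isotropy group is $0$-dimensional precisely when $W\cap iW=\{0\}$, i.e.\ precisely when $N_\Gamma\cdot z$ is totally real.

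Finally I would upgrade ``dimension zero'' to ``trivial''. The isotropy $(N_\Gamma^\mbb{C})_z$ is a Zariski-closed subgroup of the unipotent complex algebraic group $N_\Gamma^\mbb{C}$; if it is $0$-dimensional it is finite, and since a complex unipotent group is torsion-free, any finite subgroup is trivial. Hence $(N_\Gamma^\mbb{C})_z$ is trivial if and only if it is $0$-dimensional, and the chain of equivalences completes the proof. The main obstacle is exactly this passage between the global condition (triviality of the isotropy group) and the infinitesimal one (vanishing of $W\cap iW$): unipotence enters twice, ensuring on one hand that the orbit is smooth and closed so that the orbit--stabilizer dimension formula applies, and on the other that a finite isotropy subgroup is already trivial. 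An alternative to the dimension count is to show directly that $\ker\Phi_z=\{0\}$ is equivalent to $W\cap iW=\{0\}$, using that in characteristic zero the Lie algebra of the stabilizer equals $\ker\Phi_z$.
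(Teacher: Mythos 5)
Your argument is correct and follows essentially the same route as the paper: both reduce the statement to the fact that the Lie algebra of $(N_\Gamma^\mbb{C})_z$ is the kernel of the complexified evaluation map at $z$, identify its vanishing with $W\cap iW=\{0\}$ using freeness of the $N_\Gamma$-action, and then invoke that a unipotent algebraic group has no nontrivial finite subgroups to upgrade discrete isotropy to trivial isotropy. The only difference is cosmetic: you package the infinitesimal equivalence as a dimension count via the orbit--stabilizer formula, whereas the paper exhibits explicit vector fields $x+iy\in\lie{n}_\Gamma^\mbb{C}$ vanishing (or not) at $z$ in each direction.
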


\begin{proof}
For this, let $x+iy\in\lie{n}_\Gamma\oplus i\lie{n}_\Gamma= 
\lie{n}_\Gamma^\mbb{C}$ be a holomorphic vector field on $\wh{D}$ and consider 
$x(z)+j_zy(z)\in T_z\wh{D}$ where $j_z$ is the complex structure of $T_z\wh{D}$, 
for some $z\in \wh{D}$. Suppose that $N_\Gamma\cdot z$ is totally real. Then 
$x(z)+j_zy(z)=0$ implies $x(z)=y(z)=0$, and since $N_\Gamma$ acts freely on 
$\wh{D}$, we obtain $x=y=0$. Consequently, the isotropy $(N_\Gamma^\mbb{C})_z$ 
is discrete. Since $N_\Gamma^\mbb{C}$ acts algebraically and has no finite 
subgroups, we conclude that $(N_\Gamma^\mbb{C})_z$ is trivial.

Conversely, if $N_\Gamma\cdot z$ is not totally real for some $z\in \wh{D}$, 
then there are $x,y\in\lie{n}_\Gamma$ such that $x(z)=j_zy(z)$. Thus the vector 
field $x-iy\in\lie{n}_\Gamma^\mbb{C}$ vanishes at $z$, i.e., $N_\Gamma^\mbb{C}$ 
does not act freely.
\end{proof}

We are now in position to prove the Proposition~\ref{Prop:nec}.

\begin{proof}[Proof of Proposition~\ref{Prop:nec}]
Suppose that $D/\Gamma$ is a Stein manifold. As a first step we are going to 
show that $N_\Gamma$ has at least one totally real orbit in $D$.

Since $D/\Gamma$ is Stein, the domain $D$ admits a $\Gamma$-invariant smooth 
strictly plurisubharmonic function $\rho$ that is exhaustive modulo $\Gamma$. 
We may assume that $0$ is a global minimum of $\rho$. Since $N_\Gamma/\Gamma$ 
is compact, we can suppose without loss of generality that the function $\rho$ 
is invariant under $N_\Gamma$ and is an exhaustion modulo $N_\Gamma$, 
see~\cite[Lemme~2.1]{L}. Let $N_\Gamma\cdot z$ be an orbit lying in the minimal 
set of $\rho$. Then $N_\Gamma\cdot z$ is totally real due to~\cite{HW}.

Finally, due to Lemma~\ref{Lem:free}, it is enough to show that 
$N_\Gamma^\mbb{C}$ acts freely on $N_\Gamma^\mbb{C}\cdot\wh{D}\subset\mbb{C}^n$. 
In order to do this, let $z\in\wh{D}$ and consider the algebraic subgroup 
$H:=(N_\Gamma^\mbb{C})_z$ of $N_\Gamma^\mbb{C}$. Since $N_\Gamma^\mbb{C}\cdot z$ 
is closed in $\mbb{C}^n$, the intersection $\Omega:=\wh{D}\cap( 
N_\Gamma^\mbb{C}\cdot z)$ is Stein and $N_\Gamma$-equivariantly biholomorphic 
to an $N_\Gamma$-invariant Stein open neighborhood $\Omega$ of $N_\Gamma\cdot 
eH\cong N_\Gamma$ in $N_\Gamma^\mbb{C}/H\cong\mbb{C}^k$. As $D/\Gamma$ is Stein 
by assumption, $\Omega/\Gamma$ is also Stein. By applying~\cite{HW} 
and~\cite{L} as above, one sees that $N_\Gamma\cdot eH$ is totally real in 
$N_\Gamma^\mbb{C}/H$. Then, the analog argument as in the proof of 
Lemma~\ref{Lem:free} shows that $H$ is trivial, as wished.
\end{proof}

\begin{rem}
Every orbit lying in the minimal set of $\rho$ is isotropic with respect to the
K\"ahler form $i\partial\ol{\partial}\rho$. In general, it is not isotropic
with respect to the Bergman metric of $D$.
\end{rem}

\subsection{A sufficient condition for $D/\Gamma$ to be Stein}

In this subsection we prove Proposition~\ref{Prop:suff}.

As above, let $\Gamma\subset N$ be a discrete group of unipotent automorphisms 
of a bounded homogeneous domain $D$ and let $N_\Gamma$ be its Zariski closure 
in $N$. Realize $D$ as a Siegel domain of the second kind $\wh{D}\subset 
\mbb{C}^n$ such that $N$ acts by affine transformations on $\wh{D}$.

\begin{proof}[Proof of Proposition~\ref{Prop:suff}]
By assumption, there exists a maximal totally real subalgebra 
$\wh{\lie{n}}_\Gamma\subset\lie{n}$ which contains $\lie{n}_\Gamma$. At the 
group level we thus find a connected subgroup $\wh{N}_\Gamma\subset N$ 
containing $N_\Gamma$ such that $\wh{N}_\Gamma\cdot p_0$ is a maximal totally 
real submanifold of $N\cdot p_0$.

Consider the complexifications $N_\Gamma^\mbb{C}\subset \wh{N}_\Gamma^\mbb{C} 
\subset N^\mbb{C}$ as well as their algebraic actions by affine transformations 
on $\mbb{C}^n$. Since $\wh{N}_\Gamma\cdot p_0$ is maximally totally real, 
the orbit $\wh{N}_\Gamma^\mbb{C}\cdot p_0$ is open in $\mbb{C}^n$ and the 
$\wh{N}_\Gamma^\mbb{C}$-istropy at $p_0$ is trivial, see Lemma~\ref{Lem:free}. 
Since $\wh{N}_\Gamma^\mbb{C}\cdot p_0$ is also closed in $\mbb{C}^n$, we obtain 
$\wh{N}_\Gamma^\mbb{C}\cdot p_0=\mbb{C}^n$. In other words, 
$\wh{N}_\Gamma^\mbb{C}$ acts freely and transitively on $\mbb{C}^n$.

It follows that $N_\Gamma^\mbb{C}$ acts freely and properly on $\mbb{C}^n\cong 
\wh{N}_\Gamma^\mbb{C}$ and that $\mbb{C}^n/N_\Gamma^\mbb{C}\cong\mbb{C}^m$. 
Therefore the corresponding holomorphic principal bundle is holomorphically 
trivial. Together with~\cite[Th\'eor\`eme~1]{L} this implies that 
$\mbb{C}^n/\Gamma\cong(N_\Gamma^\mbb{C}/\Gamma)\times\mbb{C}^m$ is a Stein 
manifold. Since $\wh{D}/\Gamma$ is a locally Stein domain in 
$\mbb{C}^n/\Gamma$, the quotient $\wh{D}/\Gamma$ is likewise Stein 
by~\cite{DocGr}.
\end{proof}

\begin{rem}
The proof of Proposition~\ref{Prop:suff} shows that, if $\lie{n}_\Gamma$ is 
contained in a maximal totally real subalgebra of $\lie{n}$, then 
$N_\Gamma^\mbb{C}$ acts properly and freely on $\mbb{C}^n$. Hence, all 
$N_\Gamma$-orbits are totally real in $D$. Note that this follows also from 
Proposition~\ref{Prop:nec}.
\end{rem}

\begin{rem}
Under the hypotheses of Proposition~\ref{Prop:suff}, $\wh{N}_\Gamma^\mbb{C}\cong
\wh{N}_\Gamma^\mbb{C}\cdot\wh{D}=\mbb{C}^n$ is the universal glo\-balization of 
the induced local $\wh{N}_\Gamma^\mbb{C}$-action on $\wh{D}$ in the sense 
of~\cite{Pal}, see the proof of Proposition~\ref{Prop:suff}.
\end{rem}

\section{The case of the unit ball}\label{Sect:unit}

In this section we consider the unit ball $\mbb{B}_n\subset\mbb{C}^n$. Firstly, 
we illustrate Theorem~\ref{Thm:main} by two examples that can be analyzed by ad 
hoc methods. Then we review the structure of the normal $j$-algebra $\lie{b}_n$ 
of $\mbb{B}_n$, which will be used to show that every totally real subalgebra 
of $\lie{b}_n$ is contained in a maximal totally real one.

\subsection{Two examples}

We identify the unit ball $\mbb{B}_n$ with its unbounded realization
\begin{equation*}
\wh{\mbb{B}}_n=\bigl\{(z,w)\in\mbb{C}\times\mbb{C}^{n-1};\ 
\im(z)-\norm{w}^2>0\bigr\}.
\end{equation*}
For an explicit description of the vector fields belonging to its normal 
$j$-algebra $\lie{b}_n$ as well as of the corresponding one-parameter groups we 
refer the reader to~\cite[Table~1, p.~341]{Mie4}.

First, let us present an example of a nonabelian discrete group $\Gamma$ such 
that $\mbb{B}_3/\Gamma$ is Stein, thus answering the question raised 
in~\cite[Remark~7.6]{Ch} in the negative. Note that $3$ is the smallest
dimension so that a similar example can be constructed.

\begin{ex}
Let us consider the complete holomorphic vector fields 
$x_1:=2iw_1\vf{z}+\vf{w_1}$, $x_2:=2(w_1+w_2)\vf{z}+i\vf{w_1}+i\vf{w_2}$, and 
$x_3:=\vf{z}$ on $\wh{\mbb{B}}_3$. Since their only non-vanishing Lie bracket 
is $[x_1,x_2]=4x_3$, they generate a three-dimensional subalgebra in the 
nilradical of $\lie{b}_3$, isomorphic to the three-dimensional Heisenberg 
algebra. Moreover, this algebra is defined over $\mbb{Q}$ and therefore the 
corresponding connected subgroup admits a cocompact discrete subgroup $\Gamma$, 
which justifies the notation $N_\Gamma=\exp(\mbb{R}x_1\oplus\mbb{R}x_2 
\oplus\mbb{R}x_3)$. One verifies directly that every $N_\Gamma$-orbit in 
$\wh{\mbb{B}}_3$ is maximally totally real. This implies that the universal 
globalization of the local $N_\Gamma^\mbb{C}$-action on $\wh{\mbb{B}}_3$ is 
isomorphic to $N_\Gamma^\mbb{C}$ and the $N_\Gamma$-action on $\mbb{B}_3$ 
corresponds to left multiplication on $N_\Gamma^\mbb{C}$. Since 
$N_\Gamma^\mbb{C}/\Gamma$ is a Stein manifold which contains 
$\wh{\mbb{B}}_3/\Gamma$ as a domain, we see that $\mbb{B}_3/\Gamma$ is Stein 
due to~\cite{DocGr} while $\Gamma$ is not Abelian.
\end{ex}

Secondly, we present an explicit example of a unipotent discrete group $\Gamma$ 
where $\mbb{B}_2/\Gamma$ is non-Stein but admits a (singular) Stein envelope.

\begin{ex}
Let $\Gamma$ be the discrete group consisting of the automorphisms
\begin{equation*}
(z,w)\mapsto\bigl(z+2(n+im)w+i(m^2+n^2)+2k, w+m+in\bigr)=:
\varphi_{2k,m+in}(z,w),
\end{equation*}
where $m,n,k\in\mbb{Z}$ and $(z,w)\in\wh{\mbb{B}}_2$. Although we do not need 
this fact, let us remark that the Lie algebra $\lie{n}_\Gamma$ of the Zariski 
closure of $\Gamma$ coincides with the nilradical of $\lie{b}_2$.

In order to determine the quotient $\wh{\mbb{B}}_2/\Gamma$, we first consider 
the action of the normal subgroup $\Gamma_0:=\{\varphi_{2k,0};\ k\in\mbb{Z}\}
\triangleleft \Gamma$. The map $p\colon\wh{\mbb{B}}_2\to\mbb{C}^*\times 
\mbb{C}$ given by $p(z,w)=(e^{i\pi z},w)$ is $\Gamma_0$-invariant and
yields
\begin{equation*}
\wh{\mbb{B}}_2/\Gamma_0\cong p(\wh{\mbb{B}}_2)=\bigl\{(z,w)\in\mbb{C}^*\times
\mbb{C};\ \abs{z}<e^{-\pi\abs{w}^2}\bigr\}.
\end{equation*}
The induced action of $\mbb{Z}\oplus i\mbb{Z}\cong \Gamma/\Gamma_0$ on 
$p(\wh{\mbb{B}}_2)$ is given by
\begin{equation*}
(m+in)\cdot(z,w)=\bigl(e^{i\pi(2(n+im)w+i(m^2+n^2))}z,w+m+in\bigr).
\end{equation*}
It follows that the action of $\mbb{Z}\oplus i\mbb{Z}$ on $p(\wh{\mbb{B}}_2)$ 
extends to a proper action on the whole of $\mbb{C}^2$ and that the equivariant 
map $\mbb{C}\times\mbb{C}\to\mbb{C}$, $(z,w)\mapsto w$, induces a holomorphic 
line bundle $L:=\mbb{C}^2/(\mbb{Z}\oplus i\mbb{Z})\to E=\mbb{C}/(\mbb{Z}\oplus 
i\mbb{Z})$. We see that $\wh{\mbb{B}}_2/\Gamma\cong p(\wh{\mbb{B}}_2) 
/(\mbb{Z}\oplus i\mbb{Z})$ embeds into $L$ as an open neighborhood of the zero 
section minus this zero section. Note that $\wh{\mbb{B}}_2/\Gamma$ is 
Kobayashi-hyperbolic since $\wh{\mbb{B}}_2$ is so, 
see~\cite[Theorem~3.2.8(2)]{Kob}. Since $\partial\wh{\mbb{B}}_2$ is strictly 
pseudoconvex, the zero-section in $L$ has a strictly pseudoconvex neighborhood 
and hence is negative in the sense of Grauert, see~\cite[Satz~1]{Gr}. It 
follows that the zero section of $L$ can be blown down to yield a Stein space 
$Y$ with an isolated singularity containing $\wh{\mbb{B}}_2/\Gamma$ as a 
neighborhood of this singularity minus the singularity. Consequently, 
$\wh{\mbb{B}}_2/\Gamma$ is holomorphically separable but not Stein.
\end{ex}

\subsection{The normal $j$-algebra of the unit ball}

The automorphism group of the unit ball $\mbb{B}_n$ is $G=\Aut(\mbb{B}_n) 
\cong{\rm{PSU}}(n,1)$. Let $G=KAN$ be an Iwasawa decomposition with maximal 
compact subgroup $K=G_{p_0}\cong{\rm{U}}(n)$ for $p_0=0$. It is well known that 
$G$ is of real rank $1$, i.e., that $\dim A=1$. Moreover, under the 
identification $T_{p_0}\mbb{B}_n\cong \lie{a}\oplus\lie{n}$ we can rewrite the 
moment map condition~\eqref{Eqn:moment} as
\begin{equation}\label{Eqn:bracket}
[x,y](p_0)=\omega_{p_0}\bigl(x(p_0),y(p_0)\bigr)\zeta(p_0)
\end{equation}
for all $x,y\in\lie{n}$, compare~\cite[p.~52]{Pya}. This means that $\lie{n}$ is 
the Heisenberg algebra of dimension $2n-1$ with center $\mbb{R}\zeta$ defined 
by the symplectic form $\omega_{p_0}$ induced by the Bergman metric 
of $\mbb{B}_n$. As we shall see in the following subsection, it is this close 
relation between the geometry of $(\mbb{B}_n,\omega)$ and the structure of $N$ 
that enables us to prove Theorem~\ref{Thm:main} for the unit ball.

Recall from~\cite[Section~4.2]{Mie4} that the normal $j$-algebra 
$\lie{b}_n=\lie{a}\oplus\lie{n}$ can be written as
\begin{equation*}
\lie{a}=\mbb{R}\alpha\quad\text{and}\quad\lie{n}=\bigoplus_{k=1}^{n-1} 
\mbb{R}\xi_k\oplus\bigoplus_{k=1}^{n-1}\mbb{R}\xi'_k\oplus\mbb{R}\zeta,
\end{equation*}
where the only non-zero Lie brackets are
\begin{equation*}
[\xi_k,\xi'_k]=\zeta,[\alpha,\xi_k]=-\xi_k,[\alpha,\xi'_k]=-\xi'_k, 
\text{ and }[\alpha,\zeta]=-2\zeta
\end{equation*}
for all $1\leq k\leq n-1$, and where the complex structure $j\colon\lie{b}_n 
\to\lie{b}_n$ is given by
\begin{equation*}
j(\zeta)=\alpha\text{ and }j(\xi_k)=\xi'_k
\end{equation*}
for all $1\leq k\leq n-1$.

\subsection{Proof of Theorem~\ref{Thm:main} for the unit ball}

Let $\lie{b}_n=\lie{a}\oplus\lie{n}$ be the normal $j$-algebra of $\mbb{B}_n$. 
In this subsection we are going to show that every totally real subalgebra of 
$\lie{n}$ is contained in a maximal totally real subalgebra, which 
generalizes~\cite[Lemma~4.1]{Mie4}. Consequently, we can apply 
Proposition~\ref{Prop:suff} in order to prove Theorem~\ref{Thm:main} for 
$\mbb{B}_n$.

\begin{prop}\label{Prop:completion}
Every totally real subalgebra $\lie{n}'$ of $\lie{n}$ is contained in a maximal 
totally real subalgebra $\wh{\lie{n}}'$ of $\lie{n}$. Moreover, if $\lie{n}'$ 
is Abelian, then $\wh{\lie{n}}'$ can also be chosen to be Abelian.
\end{prop}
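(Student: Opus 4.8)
The plan is to translate the problem into symplectic linear algebra on the horizontal part of the Heisenberg algebra. Write $W:=\bigoplus_{k=1}^{n-1}(\mbb{R}\xi_k\oplus\mbb{R}\xi'_k)$, so that $\lie{n}=W\oplus\mbb{R}\zeta$ and $\lie{b}_n=W\oplus\mbb{R}\alpha\oplus\mbb{R}\zeta$. Since $j(\xi_k)=\xi'_k$, the subspace $W$ is $j$-invariant, of complex dimension $n-1$, and $\mbb{R}\alpha\oplus\mbb{R}\zeta$ is the complementary $j$-complex line with $j(\zeta)=\alpha$. As $\zeta$ spans the centre of $\lie{n}$, every bracket of elements of $\lie{n}$ is a multiple of $\zeta$, and I write $[x,y]=\omega(x,y)\,\zeta$; this defines an alternating form $\omega$ on $\lie{n}$ whose restriction to $W$ is nondegenerate, with $\omega(\xi_k,\xi'_k)=1$. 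By the moment-map identity~\eqref{Eqn:bracket}, $\omega|_{W}$ agrees up to a nonzero scalar with the restriction of the Bergman K\"ahler form; hence $\omega$ is $j$-invariant on $W$ and the symmetric form $(x,y)\mapsto\omega(jx,y)$ is definite. The first point I would record is the resulting rigidity: \emph{an isotropic subspace $U\subset W$ is automatically totally real}, since a nonzero $u=jv\in U\cap jU$ with $v\in U$ would satisfy $\omega(u,v)=\omega(jv,v)\neq0$, contradicting isotropy.

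Next I would isolate two standard facts about the Hermitian space $W\cong\mbb{C}^{n-1}$. First, every totally real subspace $U$ is contained in a maximal totally real subspace $\wh U$ of real dimension $n-1$, obtained by adjoining a real form of a complex complement of the complex subspace $U\oplus jU$. Secondly, every isotropic subspace is contained in a Lagrangian subspace, and every Lagrangian subspace is maximal totally real. I would also note the dimension count that forces the shape of the answer: any maximal totally real subalgebra must contain $\zeta$, because otherwise its image in $W$ would be an isotropic subspace (see the next paragraph) of real dimension $n$, which is impossible in the $2(n-1)$-dimensional symplectic space $W$.

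Now let $\pi\colon\lie{n}\to W$ be the projection with kernel $\mbb{R}\zeta$ and put $U:=\pi(\lie{n}')$. I claim that $U$ is always totally real, and that $U$ is isotropic if and only if $\lie{n}'$ is Abelian. If $\zeta\in\lie{n}'$ then $\lie{n}'=U\oplus\mbb{R}\zeta$, and computing inside the splitting $\lie{b}_n=W\oplus\mbb{R}\alpha\oplus\mbb{R}\zeta$ (using $j(\zeta)=\alpha$) gives $\lie{n}'\cap j(\lie{n}')=U\cap jU$; thus total reality of $\lie{n}'$ forces $U\cap jU=\{0\}$, and $\lie{n}'$ is Abelian exactly when $\omega|_{U}=0$. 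If instead $\zeta\notin\lie{n}'$, then $\pi|_{\lie{n}'}$ is injective and $\lie{n}'$ is the graph of a linear map $U\to\mbb{R}\zeta$; since $[\lie{n}',\lie{n}']\subset\mbb{R}\zeta$ while $\lie{n}'\cap\mbb{R}\zeta=\{0\}$, the subalgebra condition forces $\omega|_{U}=0$, so $U$ is isotropic and $\lie{n}'$ is Abelian, and by the rigidity above $U$ is again totally real.

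Finally I would carry out the extension. Choose a maximal totally real subspace $\wh U\subset W$ containing $U$, taking $\wh U$ to be Lagrangian whenever $U$ is isotropic, and set $\wh{\lie{n}}':=\wh U\oplus\mbb{R}\zeta$. Because $\zeta$ is central and $[\wh U,\wh U]\subset\mbb{R}\zeta$, this is a subalgebra; the same computation as above yields $\wh{\lie{n}}'\cap j(\wh{\lie{n}}')=\wh U\cap j\wh U=\{0\}$, so it is totally real, and $\dim_{\mbb{R}}\wh{\lie{n}}'=(n-1)+1=n$, so it is maximal. It contains $\lie{n}'$: this is immediate when $\zeta\in\lie{n}'$, and when $\zeta\notin\lie{n}'$ the graph $\lie{n}'$ lies in $\pi^{-1}(U)\subset\pi^{-1}(\wh U)=\wh{\lie{n}}'$. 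If $\lie{n}'$ is Abelian then $U$ is isotropic, $\wh U$ is Lagrangian and hence isotropic, so $\wh{\lie{n}}'$ is Abelian, which proves the last assertion. The step requiring the most care is the case $\zeta\notin\lie{n}'$: one has to recognise that the Lie-subalgebra condition already forces $U$ to be isotropic, hence totally real, and that the only way to reach a \emph{maximal} totally real subalgebra is to adjoin the central direction $\zeta$ — it is precisely here that the Lie-theoretic isotropy constraint and the complex-geometric total-reality constraint are reconciled through the definiteness of $(x,y)\mapsto\omega(jx,y)$.
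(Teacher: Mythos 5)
Your proof is correct and follows essentially the same route as the paper: both translate the statement into symplectic linear algebra on the Heisenberg algebra via the identity $[x,y]=\omega(x,y)\zeta$ coming from the Bergman--K\"ahler form, and both conclude by extending an isotropic (resp.\ totally real) subspace to a Lagrangian (resp.\ maximal totally real) one. The only cosmetic difference is that you project onto $W=\bigoplus_k(\mbb{R}\xi_k\oplus\mbb{R}\xi'_k)$ and split on whether $\zeta\in\lie{n}'$, whereas the paper evaluates at $p_0$ and splits on whether $\lie{n}'$ is Abelian --- but, as you observe, $\zeta\notin\lie{n}'$ already forces $\lie{n}'$ to be Abelian, so the two case distinctions agree.
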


\begin{proof} 
Let $\Phi\colon\lie{n}\to\lie{n}\cdot p_0$ be the linear isomorphism given by 
$\Phi(x):=x(p_0)$. Let $V\subset\lie{n}\cdot p_0$ be a real vector subspace. 
The following is a direct consequence of equation~\eqref{Eqn:bracket}.
\begin{enumerate}[(a)]
\item The preimage $\Phi^{-1}(V)$ is an Abelian subalgebra of $\lie{n}$ if and
only if $V$ is isotropic with respect to $\omega_{p_0}$.
\item The preimage $\Phi^{-1}(V)$ is a nonabelian subalgebra of $\lie{n}$ if and
only if $V$ is not isotropic and contains $\zeta(p_0)$.
\end{enumerate}

Now firstly suppose that $\lie{n}'$ is Abelian. Then $\lie{n}'\cdot p_0$ is not 
only totally real but isotropic, and by basic symplectic linear algebra there 
exists a Lagrangian subspace $V$ of $\lie{n}\cdot p_0$ that contains 
$\lie{n}'\cdot p_0$. As we have noted above, the preimage 
$\wh{\lie{n}}':=\Phi^{-1}(V)$ is an Abelian subalgebra of maximal dimension of 
$\lie{n}$ that contains the totally real subalgebra $\lie{n}'$.

In the case that $\lie{n}'$ is not Abelian, we choose any maximally totally real
subspace $V$ of $\lie{n}\cdot p_0$ that contains $\lie{n}'\cdot p_0$. Since
$\lie{n}'\cdot p_0$ contains $\zeta(p_0)$, the same holds for $V$. Hence,
it follows that $\wh{\lie{n}}':=\Phi^{-1}(V)$ has the required properties.
\end{proof}

\section{The case of the Lie ball}\label{Sect:Lie}

In this section we first describe the structure of the normal $j$-algebra 
$\lie{l}_n$ of the Lie ball $\mbb{L}_n$ in order to establish in particular the 
existence of a holomorphic submersion from $\mbb{L}_n$ onto the unit disk 
$\mbb{B}_1$ whose fibers are biholomorphic to $\mbb{B}_{n-1}$. It turns out that 
this submersion is crucial for the proof of Theorem~\ref{Thm:main} since the 
analogon of Proposition~\ref{Prop:completion} does \emph{not} hold true for 
$\lie{l}_n$.

\subsection{The normal $j$-algebra of the Lie ball}

The $n$-dimensional Lie ball is biholomorphically equivalent to the tube domain 
over the symmetric cone
\begin{equation*}
\Omega:=\bigl\{y\in\mbb{R}^n;\ y_n^2-(y_1^2+\dotsb+y_{n-1}^2)>0, y_n>0\bigr\}.
\end{equation*}
For $n=1$ this is just the upper half-plane $\mbb{H}^+$, while for $n=2$ we get 
$\mbb{H}^+\times\mbb{H}^+$. Therefore, we will concentrate on the case $n\geq3$.

In the notation of~\cite[Table~V, Chapter~X]{H} the Lie ball $\mbb{L}_n$ 
corresponds to the item \emph{BD~I} with $p=2$ and $q=n$. The connected 
component of its automorphism group is isomorphic to $G={\rm{SO}}^0(2,n)$ and 
the subgroup of affine automorphisms is $G(\Omega)\ltimes\mbb{R}^n$ with 
$G(\Omega)=\mbb{R}^{>0}{\rm{SO}}^0(1,n-1)$, see~\cite[Chapter~X.5]{FK}.

\begin{rem}
Since the group $G(\Omega)$ acts by matrix multiplication on $\mbb{C}^n$, the 
vector field corresponding to an element $x$ of the Lie algebra of $G(\Omega)$ 
can be obtained by computing $\left.\frac{d}{dt}\right|_0\exp(tx)z$ for 
$z\in\mbb{C}^n$.
\end{rem}

In view of~\cite[Proposition~2.8]{Kan2}, in order to describe the normal 
$j$-algebra $\lie{l}_n=\lie{a}\oplus\lie{n}$ of the Lie ball, it is sufficient 
to find a maximal triangular subalgebra of $\lie{so}(1,n-1)$. By doing so, we 
can choose the basis of $\lie{n}$ given by the vector fields
\begin{align*}
\xi_k&=\vf{z_k}\quad (1\leq k\leq n-2),\\
\xi'_k&=(z_n-z_{n-1})\vf{z_k}+z_k\vf{z_{n-1}}+z_k\vf{z_n}\quad(1\leq k\leq
n-2),\\
\zeta&=\vf{z_{n-1}}+\vf{z_n},\\
\eta&=\vf{z_n}-\vf{z_{n-1}}.
\end{align*}
The only non-vanishing Lie brackets are
\begin{equation*}
[\xi_k,\xi'_k]=\zeta\quad\text{and}\quad[\eta,\xi'_k]=2\xi_k
\end{equation*}
for all $1\leq k\leq n-2$. In particular, we see that $\mbb{R}\zeta$ is the
center of $\lie{n}$ and that the Abelian Lie algebra $\lie{n}'=
\bigoplus_{k=1}^{n-2}\mbb{R}\xi_k\oplus\mbb{R}\zeta$ is its derived
algebra.

A basis of $\lie{a}$ is given by
\begin{align*}
\delta&:=z_1\vf{z_1}+\dotsb+z_n\vf{z_n}\quad\text{(the Euler field), and}\\
\alpha&:=z_n\vf{z_{n-1}}+z_{n-1}\vf{z_n}.
\end{align*}
The action of $\lie{a}$ on $\lie{n}$ is determined by
\begin{equation*}
[\delta,\xi_k]=-\xi_k,\quad[\delta,\zeta]=-\zeta,\quad[\delta,\eta]=-\eta
\end{equation*}
for all $1\leq k\leq n-2$ and by
\begin{equation*}
[\alpha,\xi'_k]=-\xi'_k,\quad[\alpha,\zeta]=-\zeta,\quad[\alpha,\eta]=\eta
\end{equation*}
for all $1\leq k\leq n-2$.

To obtain the complex structure $j\colon\lie{l}_n\to\lie{l}_n$, let us fix the 
base point $p_0=ie_n$ in $\wh{\mbb{L}}_n=\mbb{R}^n+i\Omega$. This yields
\begin{equation*}
j(\xi_k)=\xi'_k,\quad j(\zeta)=\alpha+\delta=:\alpha_1,\quad 
j(\eta)=\delta-\alpha=:\alpha_2
\end{equation*}
for all $1\leq k\leq n-2$.

\subsection{The equivariant fibration of $\mbb{L}_n$}

It follows directly from the bracket relations described above that the 
subalgebra 
\begin{equation*}
\lie{b}_{n-1}:=\mbb{R}\alpha_1\oplus\bigoplus_{k=1}^{n-2}\mbb{R}
\xi_k\oplus \bigoplus_{k=1}^{n-2}\mbb{R}\xi'_k\oplus\mbb{R}\zeta
\end{equation*}
is a $j$-invariant ideal in $\lie{l}_n$ isomorphic to the normal $j$-algebra of 
the $(n-1)$-dimensional unit ball. Moreover, the quotient $\lie{b}_1\cong 
\lie{l}_n/\lie{b}_{n-1}$ is isomorphic to the $j$-subalgebra 
$\mbb{R}\alpha_2\oplus \mbb{R}\eta$ of $\lie{l}_n$. In other words, $\lie{l}_n$ 
decomposes into the semi-direct product $\lie{l}_n=\lie{b}_{n-1} 
\oplus\lie{b}_1$. Let
\begin{equation*}
L_n\cong B_{n-1}\rtimes B_1
\end{equation*}
be the corresponding decomposition at the group level.

Geometrically, this corresponds to the $L_n$-equivariant holomorphic 
submersion $\pi\colon\wh{\mbb{L}}_n\to \mbb{C}$ given by $\pi(z)=z_n-z_{n-1}$. 
In fact, an elementary argument shows that for $z\in\wh{\mbb{L}}_n$ we have 
$z_n-z_{n-1}\in\mbb{H}^+$ and $z_n+z_{n-1}\in\mbb{H}^+$. Moreover, for 
$a:=z_n-z_{n-1}\in\mbb{H}^+$ and $w:=z_n+z_{n-1}\in\mbb{H}^+$, we obtain
\begin{equation*}
\pi^{-1}(a)\cong\left\{\left.(z_1,\dotsc,z_{n-2},w)\in\mbb{C}^{n-1}\right|\ 
\im(w)>\frac{\im(z_1)^2+\dotsb+\im(z_{n-2})^2}{\im(a)}\right\},
\end{equation*}
which is a realization of the $(n-1)$-dimensional unit ball as an unbounded
tube domain. Hence, all $\pi$-fibers are isomorphic to $\mbb{B}_{n-1}$.

\begin{rem}
Since the Lie ball $\mbb{L}_n$ is irreducible for $n\geq3$, the holomorphic 
submersion $\pi$ is \emph{not} holomorphically locally trivial unless $n=2$,
as follows from~\cite{R}.
\end{rem}

\subsection{Proof of Theorem~\ref{Thm:main} for the Lie ball}

Let $\Gamma$ be a discrete subgroup of $\Aut^0(\mbb{L}_n)$ such that 
$\Gamma\subset N$. Let $N_\Gamma$ be the Zariski closure of $\Gamma$ in $N$ and 
suppose that $\lie{n}_\Gamma$ is totally real, i.e., that $\lie{n}_\Gamma\cap 
j(\lie{n}_\Gamma)=\{0\}$.

We start by giving an example that shows that Proposition~\ref{Prop:completion} 
does no longer hold true for the Lie ball.

\begin{ex}
Let us consider the $6$-dimensional normal $j$-algebra 
$\lie{l}_3=\lie{a}\oplus\lie{n}$ of the $3$-dimensional Lie ball. We claim that 
the totally real subalgebra $\mbb{R}\xi'_1\subset\lie{n}$ is not contained in a 
maximal totally real subalgebra of $\lie{n}$.

Since $\lie{b}_2\triangleleft\lie{l}_3$ is $4$-dimensional and not totally real, 
a maximal totally real subalgebra of $\lie{n}$ must contain an element 
$x=a\xi_1+b\xi_1'+c\zeta +d\eta$ with $a,b,c,d\in\mbb{R}$ and $d\not=0$. Thus it 
also must contain
\begin{equation*}
[x,\xi_1']=a\zeta+2d\xi_1\quad\text{and}\quad\bigl[[x,\xi_1'],\xi_1'\bigr] = 
2d\zeta.
\end{equation*}
Therefore it contains $\zeta$ and $\xi_1=-j(\xi_1')$ as well, which is 
impossible.
\end{ex}

The following proposition is weaker than Proposition~\ref{Prop:completion} but 
will still allow us to prove Theorem~\ref{Thm:main} for the Lie ball. For its 
statement and proof we decompose the nilradical $\lie{n}$ of $\lie{l}_n$ as
$\lie{n}=\lie{n}_{n-1}\oplus\mbb{R}\eta$ where the ideal $\lie{n}_{n-1}:= 
\lie{b}_{n-1}\cap\lie{n}$ is isomorphic to the nilradical of $\lie{b}_{n-1}$.

\begin{prop}\label{Prop:completion2}
Let $\lie{n}'$ be a totally real subalgebra of the nilradical $\lie{n}= 
\lie{n}_{n-1}\oplus\mbb{R}\eta$. If $\lie{n}'$ is not contained in 
$\lie{n}_{n-1}$, then there exists a maximal totally real subalgebra 
$\wh{\lie{n}}'$ of $\lie{n}$ which contains $\lie{n}'$.
\end{prop}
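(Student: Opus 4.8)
The plan is to reduce everything to Hermitian linear algebra on the $j$-invariant subspace
\[
W:=X\oplus X',\qquad X:=\bigoplus_{k=1}^{n-2}\mbb{R}\xi_k,\quad
X':=\bigoplus_{k=1}^{n-2}\mbb{R}\xi'_k,
\]
on which $j$ restricts to the isomorphism $\xi_k\mapsto\xi'_k$ from $X$ onto $X'$. Write
$\lie{n}=W\oplus\mbb{R}\zeta\oplus\mbb{R}\eta$ and let $p\colon\lie{n}\to W$ be the projection
along $\mbb{R}\zeta\oplus\mbb{R}\eta$. Two structural observations organize the argument.
First, because $j(\zeta)=\alpha_1$ and $j(\eta)=\alpha_2$ both lie in $\lie{a}$, for any
subalgebra $\lie{s}\subseteq\lie{n}$ one checks that
$\lie{s}\cap j(\lie{s})=(\lie{s}\cap W)\cap j(\lie{s}\cap W)$; hence $\lie{s}$ is totally real
precisely when $\lie{s}\cap W$ is a totally real subspace of $(W,j)$, and since
$\dim_{\mbb{C}}\mbb{L}_n=n$, a totally real subalgebra is maximal exactly when
$\dim(\lie{s}\cap W)=n-2$ and $\lie{s}$ surjects onto $\mbb{R}\zeta\oplus\mbb{R}\eta$. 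Second,
from $[\eta,\xi'_k]=2\xi_k$ and $[\eta,\xi_k]=0$ the operator $T:=\ad(\eta)|_W$ kills $X$ and
sends $X'$ isomorphically onto $X$ via $T=-2j$ on $X'$; in particular $\ker T=T(W)=X$ and
$T^2=0$.

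Since $\lie{n}'\not\subseteq\lie{n}_{n-1}$, I would choose $x\in\lie{n}'$ with $\eta$-coefficient
$1$ and set $\lie{m}:=\lie{n}'\cap\lie{n}_{n-1}$, a totally real subalgebra of the Heisenberg
algebra $\lie{n}_{n-1}=W\oplus\mbb{R}\zeta$ with $\lie{n}'=\lie{m}\oplus\mbb{R}x$. As completion
I propose
\[
\wh{\lie{n}}':=\wh V\oplus\mbb{R}\zeta\oplus\mbb{R}x,
\]
where $\wh V\subseteq W$ is a maximal totally real subspace still to be produced. Using
$[W,W]\subseteq\mbb{R}\zeta$ and $[x,u]=[p(x),u]+T(u)$ for $u\in W$, a short computation shows
that $\wh{\lie{n}}'$ is a subalgebra if and only if $T(\wh V)\subseteq\wh V$, that
$\wh{\lie{n}}'\cap W=\wh V$ so that $\wh{\lie{n}}'$ is totally real of dimension $n$ as soon as
$\dim\wh V=n-2$, and that $\lie{n}'\subseteq\wh{\lie{n}}'$ provided $U_0:=p(\lie{m})\subseteq\wh V$,
since every $v\in\lie{n}'$ differs from a multiple of $x$ by an element of
$\lie{m}\subseteq U_0\oplus\mbb{R}\zeta$. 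Thus the whole statement reduces to producing a maximal
totally real, $T$-invariant subspace $\wh V$ of $W$ containing $U_0$.

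The next step is to verify that $U_0=p(\lie{m})$ is already totally real and $T$-invariant. For
$T$-invariance, for $v\in\lie{m}$ one has $[x,v]\in\lie{m}$ and $p([x,v])=T(p(v))$, so
$T(U_0)\subseteq U_0$. For total reality I would split into two cases. If $\zeta\in\lie{m}$, then
$\lie{m}=(\lie{m}\cap W)\oplus\mbb{R}\zeta$ and $U_0=\lie{m}\cap W$ is totally real by hypothesis.
If $\zeta\notin\lie{m}$, then $[\lie{m},\lie{m}]\subseteq\mbb{R}\zeta\cap\lie{m}=\{0\}$, so writing
$[u,u']=\omega(u,u')\zeta$ on $W$ the subspace $U_0$ is isotropic; and an isotropic subspace of
the Hermitian space $(W,j,\omega)$ is automatically totally real, because for
$0\neq w=jv\in U_0\cap jU_0$ with $v\in U_0$ one would have $\omega(w,v)=\omega(jv,v)\neq0$,
contradicting isotropy of $w,v\in U_0$.

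The heart of the matter is then the following linear-algebra statement: every totally real,
$T$-invariant subspace $U_0$ of $W$ is contained in a maximal totally real, $T$-invariant subspace
$\wh V$. Here $T^2=0$ with $\ker T=T(W)=X$ does the work. First, $T$-invariance together with total
reality forces $U_0\cap X'=\{0\}$, since $jy\in U_0$ with $y\in X$ gives $T(jy)=2y\in U_0$ and hence
$y,jy\in U_0$; consequently the $X$-projection identifies $U_0$ with a graph
$\{\,r+j\phi(r):r\in R\,\}$ of a linear map $\phi\colon R\to X$ over a subspace $R\subseteq X$, and
$T$-invariance becomes $\phi(R)\subseteq R$ together with $\phi^2=0$. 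For any $\psi\colon X\to X$
with $\psi^2=0$ the graph $\{\,r+j\psi(r):r\in X\,\}$ is a maximal totally real subspace that is
$T$-invariant, and it contains $U_0$ if and only if $\psi|_R=\phi$. Choosing any linear projection
$\pi\colon X\to R$ and setting $\psi:=\phi\circ\pi$ gives $\psi|_R=\phi$ and
$\psi^2=\phi\pi\phi\pi=0$, the last equality because $\phi(R)\subseteq R$ and $\phi^2=0$. This
produces the desired $\wh V$ and completes the proof. The main obstacle is exactly this
simultaneous control of total reality and the $\ad(\eta)$-action: it is what breaks down for
subalgebras lying inside $\lie{n}_{n-1}$, as the preceding example shows, and it is rescued here by
$T^2=0$, which collapses $T$-invariance of the completion to the nilpotence condition $\psi^2=0$
that a square-zero map retains after extension.
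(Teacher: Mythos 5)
Your proof is correct; I checked the reduction to $W$, the two cases for total reality of $U_0=p(\lie{m})$, and the final graph construction, and found no gaps. The overall strategy coincides with the paper's: both arguments exploit that $j(\zeta)$ and $j(\eta)$ lie in $\lie{a}$, so that total reality of a subalgebra of $\lie{n}$ is detected on its trace in $W$; that $\zeta$ may always be adjoined (without $\zeta$ the intersection with $\lie{n}_{n-1}$ is abelian, hence isotropic, hence totally real); and that the completion must then take place in the $\xi_k$-directions, where $\ad(\eta)$ vanishes. The difference lies in how this last step is executed. The paper iterates: it picks a $\xi_{k_0}$ outside the image of $\lie{n}'\cap\lie{n}_{n-1}$ under the projection onto $\bigoplus_k\mbb{R}\xi_k\oplus\mbb{R}\zeta$, adjoins it, checks total reality by a bracket computation with $x_0$, and repeats until that projection is surjective. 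You instead characterize the maximal totally real $\ad(\eta)$-invariant subspaces of $W$ as graphs of $j\psi$ over $X$ with $\psi^2=0$ and solve the extension problem in one stroke via $\psi=\phi\circ\pi$. Your packaging costs a longer setup but buys a mechanical, basis-free verification of total reality where the paper's corresponding step is terse, an explicit description of all admissible completions, and a transparent identification, via $T^2=0$ and $\ker T=T(W)=X$, of exactly the mechanism that is unavailable for subalgebras contained in $\lie{n}_{n-1}$ --- which is the content of the counterexample preceding the proposition. Taking $\pi$ to be the coordinate projection killing the adjoined $\xi_{k_0}$'s recovers the paper's completion, so the two constructions agree up to this choice.
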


\begin{proof}
Since by assumption $\lie{n}'$ is not contained in $\lie{n}_{n-1}$, there is an 
element of the form
\begin{equation*}
x_0=\sum a_k\xi_k + \sum b_l\xi'_l + c\zeta + \eta
\end{equation*}
which belongs to $\lie{n}'$. Moreover, we have $\dim(\lie{n}'\cap\lie{n}_{n-1}) 
=\dim\lie{n}'-1$.

As a first step, we claim that $\lie{n}'$ is totally real if and only if 
$\lie{n}'\cap\lie{n}_{n-1}$ is so. In order to see this, suppose that 
$\lie{n}'\cap\lie{n}_{n-1}$ is totally real, the other implication being 
trivial. Choose an element $x\in \lie{n}'\cap j(\lie{n}')$ and decompose it as 
$x=x_1+tx_0$ where $x_1\in\lie{n}'\cap\lie{n}_{n-1}$ and $t\in\mbb{R}$. By 
assumption,  we have $j(x)=j(x_1)+tj(x_0)\in\lie{n}'$. Then, from 
$j(\eta)=\alpha_2$ we deduce $t=0$, hence $x=x_1\in\lie{n}'\cap\lie{n}_{n-1}$. 
Since the latter algebra is assumed to be totally real we obtain $x=0$, as was 
to be shown.

Next, observe that if $\zeta\notin\lie{n}'$, then $\lie{n}'\oplus\mbb{R}\zeta$ 
is again a totally real subalgebra, for the following reason. If 
$\zeta\notin\lie{n}'$, then $\lie{n}'\cap \lie{n}_{n-1}$ is Abelian. Therefore, 
the Lie algebra $(\lie{n}'\cap\lie{n}_{n-1})\oplus\mbb{R}\zeta$ is also 
Abelian, hence totally real, which due to the previous step implies that 
$\lie{n}'\oplus\mbb{R}\zeta$ is totally real.

Consequently, we may assume that $\zeta\in\lie{n}'$. Let us consider the 
projection $\pi\colon\lie{n}_{n-1}\to\lie{n}_{n-1}$ onto $\bigoplus_k\mbb{R} 
\xi_k\oplus\mbb{R}\zeta$ with kernel $\bigoplus_l\mbb{R}\xi'_l$. If the 
restriction
\begin{equation*}
\pi|_{\lie{n}'\cap\lie{n}_{n-1}}\colon\lie{n}'\cap\lie{n}_{n-1}\to 
\bigoplus_k\mbb{R}\xi_k\oplus\mbb{R}\zeta
\end{equation*}
is surjective, then $\dim(\lie{n}'\cap\lie{n}_{n-1})\geq n-1$, hence $\dim 
\lie{n}'=n$, and we are done.

Therefore, suppose that $\xi_{k_0}\notin\pi(\lie{n}'\cap\lie{n}_{n-1})$ for 
some $k_0$. Note that in particular $\xi_{k_0}\notin\lie{n}'$. From 
$\zeta\in\lie{n}'$ it follows that $(\lie{n}'\cap\lie{n}_{n-1}) 
\oplus\mbb{R}\xi_{k_0}$ is a subalgebra of $\lie{n}'\oplus\lie{n}_{n-1}$. 
Since moreover, $\eta$ and $\xi_{k_0}$ commute, we see that $\lie{n}'\oplus 
\mbb{R}\xi_{k_0}$ is a subalgebra of $\lie{n}$. We claim that 
$\lie{n}'\oplus\mbb{R}\xi_{k_0}$ is totally real. In order to see this it is 
sufficient to show that $(\lie{n}'\cap\lie{n}_{n-1})\oplus\mbb{R}\xi_{k_0}$ is 
totally real. This, however, follows from the fact that $\bigl[x_0, 
j(\xi_{k_0})\bigr]$ has a non-zero contribution from $\xi_{k_0}$.

Iterating this procedure we eventually construct a maximal totally real 
subalgebra of $\lie{n}$ that contains $\lie{n}'$, as was to be shown.
\end{proof}

\begin{proof}[Proof of Theorem~\ref{Thm:main} for the Lie ball]
We shall prove that $\mbb{L}_n/\Gamma$ is Stein where $\Gamma$ satisfies the 
conditions stated at the beginning of this subsection.

Let us first assume that $\lie{n}_\Gamma$ is not contained in the $j$-ideal 
$\lie{b}_{n-1}$. Due to Proposition~\ref{Prop:completion2} there exists a 
maximal totally real subalgebra $\wh{\lie{n}}_\Gamma\subset\lie{n}$ that 
contains $\lie{n}_\Gamma$. Then the result follows from 
Proposition~\ref{Prop:suff}.

If $\lie{n}_\Gamma$ is contained in $\lie{b}_{n-1}$, we find a maximal totally 
real subalgebra $\wh{\lie{n}}_\Gamma$ of $\lie{b}_{n-1}$ that contains 
$\lie{n}_\Gamma$, see Proposition~\ref{Prop:completion}. Let $\wh{N}_\Gamma$ be 
the corresponding closed subgroup of $\Aut^0(\mbb{L}_n)$. The holomorphic 
submersion $\pi\colon\wh{\mbb{L}}_n\to\mbb{H}^+$ is $\wh{N}_\Gamma$-invariant 
and extends to a $\wh{N}_\Gamma^\mbb{C}$-invariant holomorphic submersion 
$\wh{\pi}\colon\wh{N}_\Gamma^\mbb{C}\cdot\wh{\mbb{L}}_n\to\mbb{H}^+$. Then, 
$\wh{N}_\Gamma^\mbb{C}$ acts freely and properly on 
$\wh{N}_\Gamma^\mbb{C}\cdot\wh{\mbb{L}}_n$, and the fibers of $\wh{\pi}$ are 
precisely the $\wh{N}_\Gamma^\mbb{C}$-orbits, isomorphic to $\mbb{C}^{n-1}$. In 
other words, $\wh{\pi}$ defines an $\wh{N}_\Gamma^\mbb{C}$-principal bundle. 
Since $\mbb{H}^+$ is contractible, this $\wh{N}_\Gamma^\mbb{C}$-principal 
bundle is holomorphically trivial, i.e., we have
\begin{equation*}
\bigl(\wh{N}_\Gamma^\mbb{C}\cdot\wh{\mbb{L}}_n\bigr)/\Gamma 
\cong(\wh{N}_\Gamma^\mbb{C}/\Gamma)\times\mbb{H}^+.
\end{equation*}
Since $\wh{N}_\Gamma^\mbb{C}/\Gamma$ is Stein due to~\cite[Th\'eor\`eme~1]{L}, 
the same is true for $\bigl(\wh{N}_\Gamma^\mbb{C}\cdot\wh{\mbb{L}}_n\bigr)/ 
\Gamma$. Hence, the theorem of Docquier-Grauert~\cite{DocGr} implies that 
$\wh{\mbb{L}}_n/\Gamma$ is Stein as well.
\end{proof}

\section{The case of the Siegel disk}\label{Sect:Siegel}

In this section we present an example that shows that the analoga of 
Theorem~\ref{Thm:main} and Proposition~\ref{Prop:completion2} do not hold true 
for arbitrary bounded homogeneous domains. This example will be constructed on a 
$5$-dimensional bounded homogeneous domain holomorphically embedded in the 
$6$-dimensional Siegel disk $\mbb{S}_3$. Therefore we describe as a first step 
the normal $j$-algebra $\lie{s}_n$ of $\mbb{S}_n$.

\subsection{The normal $j$-algebra of Siegel's upper half-plane}

Recall that the Siegel disk $\mbb{S}_n$ can be realized as Siegel's upper half 
plane which is the symmetric tube domain associated with the cone of positive 
definite real symmetric matrices. The automorphism group of $\mbb{S}_n$ is 
isomorphic to the symplectic group ${\rm{Sp}}(n,\mbb{R})$ and $\mbb{S}_n$ 
corresponds to the hermitian symmetric space of type \emph{C~I}  in~\cite{H}. 
For $n=1$, Siegel's upper half-plane is the usual upper half-plane, while, for 
$n=2$, it is isomorphic to the $3$-dimensional Lie ball.

The linear automorphism group of the symmetric cone of positive definite 
real symmetric matrices is ${\rm{GL}}(n,\mbb{R})$ acting by $g\cdot A:= gAg^t$,
see~\cite[p.~213]{FK}. Hence, the normal $j$-algebra $\lie{s}_n$ of $\mbb{S}_n$ 
can be determined as before using~\cite[Proposition~X.5.4]{FK} 
and~\cite[Proposition~2.8]{Kan2}. Our description of $\lie{s}_n$ 
follows~\cite{Is}.

Let us denote by $\lie{u}_n$ the solvable Lie algebra of lower triangular real 
$n\times n$ matrices. We have the decomposition $\lie{u}_n=\lie{a}_n 
\oplus\lie{u}'_n$ where $\lie{a}_n$ is the Abelian Lie algebra of diagonal 
matrices in $\mbb{R}^{n\times n}$. The normal $j$-algebra of Siegel's upper half 
plane can be realized as
\begin{equation*}
\lie{s}_n:=\left\{\left.
\begin{pmatrix}
A&B\\0&-A^t
\end{pmatrix}
\right|\ A\in\lie{u}_n,\ B\in\Sym(n,\mbb{R})\right\}.
\end{equation*}
Note that $\varphi\colon\lie{u}_n\to\Sym(n,\mbb{R})$, $\varphi(A)=A+A^t$, is a
linear isomorphism. The complex structure $j\colon\lie{s}_n\to\lie{s}_n$ is  
given by
\begin{equation*}
j
\begin{pmatrix}
A&B\\0&-A^t
\end{pmatrix}=
\begin{pmatrix}
\varphi^{-1}(B)&-\varphi(A)\\0&-\varphi^{-1}(B)^t
\end{pmatrix}.
\end{equation*}
The linear form $\lambda\in\lie{s}_n^*$ corresponding to the Bergman metric is 
given by
\begin{equation*}
\lambda
\begin{pmatrix}
A&B\\0&-A^t
\end{pmatrix}=\tr(B).
\end{equation*}

The elements of the solvable Lie group $S_n$ are of the form $\left( 
\begin{smallmatrix}A&B\\0&(A^t)^{-1}\end{smallmatrix}\right)$ where $A$ is 
lower triangular and $B$ is symmetric. An element of the form 
$\left(\begin{smallmatrix}A&0\\0&(A^t)^{-1}\end{smallmatrix}\right)$ acts on 
$\mbb{S}_n$ by $Z\mapsto AZA^t$ while the elements of the form $\left( 
\begin{smallmatrix}I_n&B\\0&I_n\end{smallmatrix}\right)$ act by translation.
The vector field induced by $x\in\lie{s}_n$ is given by $\left.\frac{d}{dt} 
\right|_0\exp(tx)\cdot Z$ for $Z\in\mbb{S}_n$.

One verifies directly that the subspace of $\lie{s}_n$ consisting of all
matrices $A=(a_{kl})$ and $B=(b_{kl})$ such that $a_{kl}=b_{kl}=0$ for all 
$1\leq k,l\leq n-1$ is a $j$-invariant ideal isomorphic to $\lie{b}_n$, while 
the subspace consisting of all matrices where $a_{nl}=b_{nl}=0$ for all $l$ is 
a $j$-invariant complementary subalgebra isomorphic to $\lie{s}_{n-1}$. 
Consequently, $\lie{s}_n$ decomposes as a semi-direct sum $\lie{s}_{n-1} 
\oplus\lie{b}_n$. Geometrically, this decomposition corresponds to the 
equivariant fibration $\pi_n\colon\wh{\mbb{S}}_n\to\wh{\mbb{S}}_{n-1}$ where 
$\pi_n(Z)$ is the submatrix of $Z$ consisting of the first $n-1$ lines and 
columns.

In the following we will concentrate on the case $n=3$. Here we have
$\lie{s}_3=\lie{b}_3\oplus\lie{b}_2\oplus\lie{b}_1$ where $\lie{b}_3$ is a 
$j$-ideal in $\lie{s}_3$ and $\lie{b}_2$ is a $j$-ideal in 
$\lie{s}_2=\lie{b}_2\oplus\lie{b}_1$. The composition of the two equivariant 
fibrations is the map $\pi:=\pi_2\circ\pi_3\colon\wh{\mbb{S}}_3\to\mbb{H}^+$ 
given by $\pi(Z)=z_{11}$. For the rest of this section let $D:=\pi^{-1}(i)$ be 
the $5$-dimensional bounded homogeneous (non-symmetric) domain corresponding to 
the normal $j$-algebra $\lie{b}_3\oplus\lie{b}_2$. Let us consider the bases 
$(\alpha_3,\xi_{31},\xi_{32},\xi'_{31},\xi'_{32},\zeta_3)$ of $\lie{b}_3$ and 
$(\alpha_2,\xi_{21},\xi'_{21},\zeta_2)$ of $\lie{b}_2$. Let us realize these 
elements explicitly as matrices as well as vector fields in the coordinates of 
the entries of $Z=(z_{kl})\in\Sym(3,\mbb{C})$:
\begin{align*}
\zeta_3&=-2
\begin{pmatrix}
0&E_{33}\\0&0
\end{pmatrix}\mapsto-2\vf{z_{33}}, &
\alpha_3&=
\begin{pmatrix}
E_{33}&0\\0&-E_{33}
\end{pmatrix}\mapsto 
z_{13}\vf{z_{13}}+z_{23}\vf{z_{23}}+2z_{33}\vf{z_{33}}, \\
\xi_{31}&=
\begin{pmatrix}
0&E_{13}+E_{31}\\0&0
\end{pmatrix}\mapsto\vf{z_{13}}, &
\xi'_{31}&=
\begin{pmatrix}
E_{31}&0\\0&-E_{13}
\end{pmatrix}\mapsto z_{11}\vf{z_{13}}+z_{12}\vf{z_{23}}+2z_{13}\vf{z_{33}},\\
\xi_{32}&=
\begin{pmatrix}
0&E_{23}+E_{32}\\0&0
\end{pmatrix}\mapsto\vf{z_{23}}, &
\xi'_{32}&=
\begin{pmatrix}
E_{32}&0\\0&-E_{23}
\end{pmatrix}\mapsto z_{12}\vf{z_{13}}+z_{22}\vf{z_{23}}+2z_{23}\vf{z_{33}},\\
\zeta_2&=-2
\begin{pmatrix}
0&E_{22}\\0&0
\end{pmatrix}\mapsto-2\vf{z_{22}}, &
\alpha_2&=
\begin{pmatrix}
E_{22}&0\\0&-E_{22}
\end{pmatrix}\mapsto 
z_{12}\vf{z_{12}}+2z_{22}\vf{z_{22}}+z_{23}\vf{z_{23}},\\
\xi_{21}&=
\begin{pmatrix}
0&E_{12}+E_{21}\\0&0
\end{pmatrix}\mapsto\vf{z_{12}}, &
\xi'_{21}&=
\begin{pmatrix}
E_{21}&0\\0&-E_{12}
\end{pmatrix}\mapsto z_{11}\vf{z_{12}}+2z_{12}\vf{z_{22}}+z_{13}\vf{z_{23}}.
\end{align*}

For instance, in order to find the vector field belonging to $\xi'_{31}$ we 
calculate
\begin{equation*}
\exp(t\xi'_{31})\cdot Z=
\begin{pmatrix}
1&0&0\\0&1&0\\t&0&1
\end{pmatrix}Z
\begin{pmatrix}
1&0&t\\0&1&0\\0&0&1
\end{pmatrix}=
\begin{pmatrix}
z_{11} & z_{12} & z_{13}+tz_{11}\\
z_{12} & z_{22} & z_{23}+tz_{12}\\
z_{13}+tz_{11} & z_{23}+tz_{12} & z_{33}+2tz_{13}+t^2z_{11}
\end{pmatrix}
\end{equation*}
and then derive with respect to $t$.

The representation of $\lie{b}_2$ on $\lie{b}_3$ is defined by
\begin{align*}
[\alpha_2,\xi_{32}]&=\xi_{32} &
[\alpha_2,\xi'_{32}]&=-\xi'_{32}\\
[\xi_{21},\xi'_{31}]&=-\xi_{32} &
[\xi_{21},\xi'_{32}]&=-\xi_{31}\\
[\xi'_{21},\xi_{31}]&=\xi_{32} &
[\xi'_{21},\xi'_{32}]&=-\xi'_{31}\\
[\zeta_2,\xi'_{32}]&=2\xi_{32},
\end{align*}
all other brackets being zero.

\subsection{A counterexample}

We present an example that shows that the analoga of 
Proposition~\ref{Prop:completion2} and Theorem~\ref{Thm:main} do not hold true 
for $D$.

Let $\lie{n}$ denote the nilradical of $\lie{b}_3\oplus\lie{b}_2$ and consider 
the elements $x_1,x_2,x_3\in\lie{n}$ given by
\begin{equation*}
x_1:=\xi'_{31}+\zeta_3+\xi_{21},\ x_2:=-\xi_{31}+\xi'_{21},\ x_3:=\zeta_3+ 
\zeta_2.
\end{equation*}
Since $[x_1,x_2]=x_3$, they generate a Lie subalgebra $\lie{n}_\Gamma:=  
\mbb{R}x_1\oplus\mbb{R}x_2\oplus\mbb{R}x_3$ that is isomorphic to the 
$3$-dimensional Heisenberg algebra and projects surjectively onto the nilradical 
of $\lie{b}_2$. Moreover, the corresponding group $N_\Gamma$ of automorphisms 
of $D$ admits a cocompact discrete subgroup, which justifies the notation 
$\lie{n}_\Gamma$.

We shall see that all $N_\Gamma$-orbits in $D$ are totally real, while 
$\lie{n}_\Gamma$ is not contained in any maximal totally real subalgebra of 
$\lie{n}$. This shows that the analogon of Proposition~\ref{Prop:completion2} 
does not hold true for the domain $D$, even under the stronger assumption that 
\emph{all} $N_\Gamma$-orbits are totally real in $D$.

\begin{rem}
A subalgebra of codimension $1$ in a nilpotent Lie algebra is automatically 
an ideal. We apply this result in the following way: If $y\in\lie{n}$ is an 
element such that $\lie{n}_\Gamma\oplus\mbb{R}y$ is a subalgebra, then $y$ 
normalizes $\lie{n}_\Gamma$.
\end{rem}

\begin{lem}
The Lie algebra $\lie{n}_\Gamma$ is not contained in a maximal totally real 
subalgebra of $\lie{n}$. Hence, the analogon of 
Proposition~\ref{Prop:completion2} does not hold true for the domain $D$.
\end{lem}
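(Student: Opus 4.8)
The plan is to argue by contradiction: I assume there is a maximal (hence $5$-dimensional) totally real subalgebra $\wh{\lie{n}}'\subseteq\lie{n}$ with $\lie{n}_\Gamma\subseteq\wh{\lie{n}}'$, and I derive that $\wh{\lie{n}}'$ must simultaneously avoid and contain the central element $\zeta_3$. The first ingredient is an elementary feature of $j$. I split $\lie{n}=W\oplus Z$, where $W:=\mbb{R}\xi_{31}\oplus\mbb{R}\xi_{32}\oplus\mbb{R}\xi'_{31}\oplus\mbb{R}\xi'_{32}\oplus\mbb{R}\xi_{21}\oplus\mbb{R}\xi'_{21}$ is $j$-stable (since $j(\xi_{3k})=\xi'_{3k}$ and $j(\xi_{21})=\xi'_{21}$) and $Z:=\mbb{R}\zeta_3\oplus\mbb{R}\zeta_2$ satisfies $j(Z)\subseteq\mbb{R}\alpha_3\oplus\mbb{R}\alpha_2\subseteq\lie{a}$, so that $j(Z)\cap\lie{n}=\{0\}$. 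I would then record the reduction: for any subalgebra $\lie{h}\subseteq\lie{n}$, one has $\lie{h}$ totally real if and only if $\lie{h}\cap W$ is totally real in $(W,j)$. Indeed, if $v\in\lie{h}$ has $j(v)\in\lie{h}\subseteq\lie{n}$, then its $Z$-component must vanish because $j(Z)\cap\lie{n}=\{0\}$, so $v\in W$ and $j(v)\in W$. In particular $\dim(\lie{h}\cap W)\le\frac12\dim W=3$ and $\dim\lie{h}\le 3+\dim Z=5$, consistent with the maximal case.

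Next I exploit the position of $\lie{n}_\Gamma$ relative to this splitting. Setting $p:=\xi'_{31}+\xi_{21}\in W$, one checks $j(p)=-\xi_{31}+\xi'_{21}=x_2$ and $x_1=p+\zeta_3$, so the image of $\lie{n}_\Gamma$ under the projection $\lie{n}\to W$ is the $j$-complex line $\mbb{R}p\oplus\mbb{R}j(p)$. Now $x_2=j(p)$ lies in $\lie{n}_\Gamma\cap W\subseteq\wh{\lie{n}}'\cap W$, and the latter is totally real; were $p\in\wh{\lie{n}}'\cap W$ as well, then $x_2=j(p)$ would lie in $(\wh{\lie{n}}'\cap W)\cap j(\wh{\lie{n}}'\cap W)=\{0\}$, which is absurd. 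Hence $p\notin\wh{\lie{n}}'$, and since $x_1=p+\zeta_3\in\wh{\lie{n}}'$ this forces $\zeta_3\notin\wh{\lie{n}}'$.

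Finally I obtain the opposite conclusion from nilpotency. Since $\wh{\lie{n}}'$ is nilpotent and $\lie{n}_\Gamma$ is a proper subalgebra, its normalizer grows, so I may choose $y\in N_{\wh{\lie{n}}'}(\lie{n}_\Gamma)\setminus\lie{n}_\Gamma$ and form the codimension-one subalgebra $\lie{h}_4:=\lie{n}_\Gamma\oplus\mbb{R}y$ (here $y$ normalizes $\lie{n}_\Gamma$, as in the Remark). A direct computation of $N_\lie{n}(\lie{n}_\Gamma)$ gives $N_\lie{n}(\lie{n}_\Gamma)=\mbb{R}(\xi_{31}-\xi'_{21})\oplus\mbb{R}\xi_{32}\oplus\mbb{R}(\xi'_{31}+\xi_{21})\oplus\mbb{R}\zeta_3\oplus\mbb{R}\zeta_2$, so that modulo $\lie{n}_\Gamma$ the class of $y$ is a nonzero combination of $\xi_{32}$ and $\zeta_3$. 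If that class is a multiple of $\zeta_3$, then $\zeta_3\in\lie{h}_4\subseteq\wh{\lie{n}}'$, contradicting the previous paragraph. Otherwise one may take $y\equiv\xi_{32}+t\zeta_3$, and a second computation shows $N_\lie{n}(\lie{h}_4)=N_\lie{n}(\lie{n}_\Gamma)$; choosing $z\in N_{\wh{\lie{n}}'}(\lie{h}_4)\setminus\lie{h}_4$ and using $\dim\wh{\lie{n}}'=5$ gives $\wh{\lie{n}}'=\lie{h}_4\oplus\mbb{R}z$, and the class of $z$ in $N_\lie{n}(\lie{h}_4)/\lie{h}_4=\mbb{R}[\zeta_3]$ is nonzero, so again $\zeta_3\in\wh{\lie{n}}'$. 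Either way we contradict $\zeta_3\notin\wh{\lie{n}}'$, which proves the lemma.

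The hard part will be the last paragraph, specifically showing that enlarging $\lie{n}_\Gamma$ to the intermediate $\lie{h}_4$ does not enlarge the normalizer, so that the one remaining direction in $N_\lie{n}(\lie{h}_4)/\lie{h}_4$ is exactly $\zeta_3$. This rests on two successive normalizer computations; the decisive cancellation is that the linear system expressing $[z,x_1],[z,x_2]\in\lie{h}_4$ collapses, via the factor $1+t^2\neq0$, back to the conditions $a_1=-e$, $b_1=d$, $b_2=0$ that already defined $N_\lie{n}(\lie{n}_\Gamma)$.
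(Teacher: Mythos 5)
Your proof is correct, and I verified the two computations you flag as the crux: $N_{\lie{n}}(\lie{n}_\Gamma)$ is the $5$-dimensional space you write down, and for $\lie{h}_4=\lie{n}_\Gamma\oplus\mbb{R}(\xi_{32}+t\zeta_3)$ the conditions $[z,x_1],[z,x_2]\in\lie{h}_4$ read $a_1+b_1'=t(a_1'-b_1)$ and $a_1'-b_1=-t(a_1+b_1')$, which indeed collapse via $1+t^2\neq0$ to the defining equations of $N_{\lie{n}}(\lie{n}_\Gamma)$. The overall skeleton coincides with the paper's: both arguments funnel into the element $y_\tau=\xi_{32}+\tau\zeta_3$, the codimension-one-ideal remark for nilpotent algebras, and the $5$-dimensional normalizer that forces the contradiction. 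But you reach that point differently in two respects. First, your $W\oplus Z$ splitting, with $j(Z)\subset\lie{a}$, gives the clean statement that a subalgebra of $\lie{n}$ is totally real iff its intersection with $W$ is, and hence yields $\zeta_3\notin\wh{\lie{n}}'$ immediately from $x_1=p+\zeta_3$ and $x_2=j(p)$; the paper extracts the same obstruction (``$\zeta_3,\zeta_2\in\wh{\lie{n}}_\Gamma$ would force $-j(x_2)\in\wh{\lie{n}}_\Gamma$'') only in the middle of the coefficient computation. Second, you obtain a normalizing element $y$ for free from the fact that proper subalgebras of nilpotent Lie algebras grow under normalization, whereas the paper proves the stronger statement that \emph{every} $y\in\wh{\lie{n}}_\Gamma$ normalizes $\lie{n}_\Gamma$, at the cost of the identity $(a_1+b_1')^2+(a_1'-b_1)^2=0$ derived from total reality. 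Your route replaces that quadratic identity by a second application of the normalizer-growth fact and ends with the cleaner contradiction $\zeta_3\in\wh{\lie{n}}'$ rather than the paper's ``the normalizer is not totally real''; the two endgames are equivalent, since the normalizer fails to be totally real precisely because it contains $\zeta_3$ and $\zeta_2$, hence $p$ and $j(p)$.
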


\begin{proof}
Suppose for a moment that $\wh{\lie{n}}_\Gamma$ is a maximal totally real 
subalgebra of $\lie{n}$ with $\lie{n}_\Gamma\subset\wh{\lie{n}}_\Gamma$ and let 
$y\in\wh{\lie{n}}_\Gamma\setminus\lie{n}_\Gamma$. Writing
\begin{equation*}
y=a_1\xi_{31}+a_2\xi_{32}+a_1'\xi_{31}'+a_2'\xi_{32}'+c_3\zeta_3+b_1\xi_{21}+ 
b_1'\xi_{21}'+c_2\zeta_2
\end{equation*}
we find
\begin{equation*}
[y,x_1]=a_2'\xi_{31}+(a_1'-b_1)\xi_{32}+a_1\zeta_3-b_1'\zeta_2\quad\text{and}
\quad\bigl[[y,x_1],x_1\bigr]=a_2'\zeta_3.
\end{equation*}
If $a_2'\not=0$, then $\wh{\lie{n}}_\Gamma$ would contain $\zeta_3$ and 
$\zeta_2$ and consequently $\xi_{31}'+\xi_{21}=-j(x_2)$ as well, thus 
contradicting the fact that $\wh{\lie{n}}_\Gamma$ is totally real. Hence, we 
must have $a_2'=0$.

It follows that the elements
\begin{align*}
[y,x_1]&=(a_1'-b_1)\xi_{32}+a_1\zeta_3-b_1'\zeta_2\quad\text{and}\\
[y,x_2]&=-(a_1+b_1')\xi_{32}+a_1'\zeta_3+b_1\zeta_2
\end{align*}
belong to $\wh{\lie{n}}_\Gamma$. Consequently, $\wh{\lie{n}}_\Gamma$ contains 
also the element
\begin{equation*}
(a_1+b_1')[y,x_1]+(a_1'-b_1)[y,x_2]=
(a_1^2-a_1'b_1+a_1b_1'+a_1'^2)\zeta_3-(b_1^2-a_1'b_1+a_1b_1'+b_1'^2)\zeta_2.
\end{equation*}
As above, since $\wh{\lie{n}}_\Gamma$ is totally real, neither $\zeta_3$ nor 
$\zeta_2$ belong to $\wh{\lie{n}}_\Gamma$. This gives
\begin{equation*}
a_1'^2-a_1'b_1+a_1b_1'+a_1^2=-b_1^2+a_1'b_1-a_1b_1'-b_1'^2,
\end{equation*}
which is equivalent to
\begin{equation*}
(a_1+b_1')^2+(a_1'-b_1)^2=0.
\end{equation*}

Hence, we obtain $a_1=-b_1'$ and $a_1'=b_1$. In summary, this yields
\begin{equation*}
y=a_1(\xi_{31}-\xi_{21}')+a_2\xi_{32}+a_1'(\xi_{31}'+\xi_{21})+c_3\zeta_3+ 
c_2\zeta_2.
\end{equation*}
In particular $y$ normalizes $\lie{n}_\Gamma$.

Adding $a_1x_2-a_1'x_1-c_2x_3$ to $y$, we conclude that $\wh{\lie{n}}_\Gamma 
\setminus\lie{n}_\Gamma$ contains an element of the form 
\begin{equation}\label{Eqn:ytau}
y_\tau:=\xi_{32}+\tau\zeta_3
\end{equation}
for some $\tau\in\mbb{R}$. Observe that $y_\tau$ centralizes $\lie{n}_\Gamma$ 
and that $\lie{n}_\Gamma\oplus\mbb{R}y_\tau$ is a totally real subalgebra of 
$\lie{n}$.

According to the above remark, $\lie{n}_\Gamma\oplus\mbb{R}y_\tau$ must be a 
normal subalgebra of $\wh{\lie{n}}_\Gamma$. Therefore, we calculate its 
normalizer and find
\begin{equation*}
\mathcal{N}_{\lie{n}}(\lie{n}_\Gamma\oplus\mbb{R}y_\tau)
=\lie{n}_\Gamma\oplus\mbb{R}\xi_{32}\oplus\mbb{R}\zeta_3 
=\mbb{R}(\xi'_{31}+\xi_{21})\oplus\mbb{R}(-\xi_{31}+
\xi'_{21})\oplus\mbb{R}\xi_{32}\oplus\mbb{R}\zeta_3\oplus\mbb{R}\zeta_2.
\end{equation*}
Since the normalizer is $5$-dimensional, it must coincide with 
$\wh{\lie{n}}_\Gamma$. This however contradicts our assumption because the 
normalizer is \emph{not} totally real.
\end{proof}

Let $\wh{N}_\Gamma$ be the connected Lie group having Lie algebra 
$\lie{n}_\Gamma\oplus\mbb{R}y_\tau$ where $y_\tau$ is defined in 
Equation~\eqref{Eqn:ytau}. Since $y_\tau$ centralizes $\lie{n}_\Gamma$, the 
group $\wh{N}_\Gamma$ admits cocompact discrete subgroups. We will show that the 
quotient of $D$ with respect to any cocompact discrete subgroup of 
$\wh{N}_\Gamma$ is \emph{not} Stein, hence that the analogon of 
Theorem~\ref{Thm:main} does not hold for $D$. For this it is enough to find an 
$\wh{N}_\Gamma$-orbit in $D$ which is not totally real. Note that this implies 
again that $\lie{n}_\Gamma\oplus\mbb{R}y_\tau$ cannot be contained in a maximal 
totally real subalgebra. Indeed, for every $\alpha>(1+\tau^2)^{-2}$ the matrix
\begin{equation*}
Z_0=
\begin{pmatrix}
i & \frac{\tau+i}{1+\tau^2} & 0\\
\frac{\tau+i}{1+\tau^2} & i\alpha & 0\\
0 & 0 & i
\end{pmatrix}
\end{equation*}
belongs to $D$, and one sees without difficulty that $\wh{N}_\Gamma\cdot Z_0$ 
is not totally real.

Nevertheless, we shall prove in the rest of this subsection that $D/\Gamma$ is 
a Stein manifold where $\Gamma$ is any cocompact discrete subgroup of 
$N_\Gamma$. Using the description of the elements of $\lie{s}_n$ as matrices, we 
can realize $N_\Gamma^\mbb{C}$ as the matrix group
\begin{equation}\label{Eqn:MatrixGroup}
N_\Gamma^\mbb{C}=\left\{\left.
\begin{pmatrix}
1&0&0&0&a&-b\\
b&1&0&a&-2c&-\frac{a^2+b^2}{2}\\
a&0&1&-b&\frac{a^2+b^2}{2}&-2(a+c)\\
0&0&0&1&-b&-a\\
0&0&0&0&1&0\\
0&0&0&0&0&1\\
\end{pmatrix}\right|\ a,b,c\in\mbb{C}\right\}.
\end{equation}
The action of $N_\Gamma^\mbb{C}$ on $\Sym(3,\mbb{C})$ is given by
\begin{multline*}
Z=(z_{kl})\mapsto\\
\begin{pmatrix}
z_{11} & z_{12}+bz_{11}+a & z_{13}+az_{11}-b\\
z_{12}+bz_{11}+a & z_{22}+2bz_{12}+b^2z_{11}+ab-2c & 
z_{23}+az_{12}+bz_{13}+abz_{11}+\frac{a^2-b^2}{2}\\
z_{13}+az_{11}-b & z_{23}+az_{12}+bz_{13}+abz_{11}+\frac{a^2-b^2}{2} & 
z_{33}+2az_{13}+a^2z_{11}-ab-2(a+c)
\end{pmatrix}.
\end{multline*}
It is not difficult to verify that $N_\Gamma^\mbb{C}$ acts freely on 
$\Sym(3,\mbb{C})$, which in particular implies that $N_\Gamma$ has only totally 
real orbits in $D$, see Lemma~\ref{Lem:free}.

In the following we will show that $N_\Gamma^\mbb{C}$ acts properly on 
$\Sym(3,\mbb{C})$ with quotient manifold $\Sym(3,\mbb{C})/N_\Gamma^\mbb{C} 
\cong\mbb{C}^3$. It then follows that the principal bundle $\Sym(3,\mbb{C})\to 
\mbb{C}^3$ is holomorphically trivial. Consequently, $\Sym(3,\mbb{C})/\Gamma$ 
and $D/\Gamma$ are Stein.

In order to prove that the $N_\Gamma^\mbb{C}$-action is proper we shall use the 
following lemma.

\begin{lem}\label{Lem:PropernessinSteps}
Let $G$ be a Lie group acting smoothly and freely on a manifold $M$ and let $H$ 
be a closed normal subgroup of $G$. Then $G$ acts properly on $M$ if and only if
$H$ acts properly on $M$ and $G/H$ acts properly on $M/H$.
\end{lem}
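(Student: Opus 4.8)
The plan is to prove both implications of Lemma~\ref{Lem:PropernessinSteps} by working directly with the characterization of properness via convergence of sequences. Recall that a smooth action of a Lie group $G$ on a manifold $M$ is proper if and only if for every pair of sequences $(g_k)$ in $G$ and $(x_k)$ in $M$ such that $x_k\to x$ and $g_k\cdot x_k\to y$ for some $x,y\in M$, the sequence $(g_k)$ admits a convergent subsequence in $G$. I would adopt this formulation throughout, since it interacts cleanly with passing to the quotient $M/H$ and to the subgroup $H$.

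The forward direction is the routine one. Assuming $G$ acts properly, properness of $H$ on $M$ is immediate since $H$ is a closed subgroup: any sequence in $H$ witnessing a potential failure of properness is in particular a sequence in $G$, so it has a subsequence converging in $G$, and by closedness the limit lies in $H$. For properness of $G/H$ on $M/H$, I would start with sequences $(\ol{g}_k)$ in $G/H$ and $(\ol{x}_k)$ in $M/H$ with $\ol{x}_k\to\ol{x}$ and $\ol{g}_k\cdot\ol{x}_k\to\ol{y}$, lift the points $\ol{x}_k$ and the group elements $\ol{g}_k$ to $x_k\in M$ and $g_k\in G$ (using that the quotient map $M\to M/H$ is a submersion and $G\to G/H$ admits local sections), arrange after modification by elements of $H$ that $x_k\to x$ and $g_k\cdot x_k\to y$ in $M$, invoke properness of $G$ to extract a convergent subsequence $g_k\to g$, and finally project back down to obtain $\ol{g}_k\to\ol{g}$ in $G/H$.

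The converse is where the real content lies, and I expect the adjustment step in it to be the main obstacle. Suppose $H$ acts properly on $M$ and $G/H$ acts properly on $M/H$; take sequences $(g_k)$ in $G$ and $(x_k)$ in $M$ with $x_k\to x$ and $g_k\cdot x_k\to y$. Projecting to $M/H$ gives $\ol{x}_k\to\ol{x}$ and $\ol{g}_k\cdot\ol{x}_k\to\ol{y}$, so properness of $G/H$ on $M/H$ furnishes a subsequence along which $\ol{g}_k\to\ol{g}$ in $G/H$. Using a local section of $G\to G/H$ near $\ol{g}$, I would write $g_k=\sigma(\ol{g}_k)h_k$ with $\sigma(\ol{g}_k)\to\sigma(\ol{g})=:g_0$ and $h_k\in H$. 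Then from $x_k\to x$ and $g_k\cdot x_k\to y$ one computes that $h_k\cdot x_k=\sigma(\ol{g}_k)^{-1}g_k\cdot x_k\to g_0^{-1}\cdot y$, while still $x_k\to x$; properness of $H$ on $M$ now yields a further subsequence with $h_k\to h\in H$. Combining, $g_k=\sigma(\ol{g}_k)h_k\to g_0 h\in G$ along this subsequence, which proves properness of $G$.

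The delicate points to handle carefully are the lifting arguments: one must ensure the chosen local sections of $G\to G/H$ and of $M\to M/H$ are defined on neighborhoods containing the relevant limit points, and that after passing to a subsequence the lifts can be taken convergent. These are standard facts about principal-type fibrations and submersions, but they are precisely where the hypothesis that $H$ be a \emph{closed} subgroup is used (so that $G/H$ is a manifold and the projection is a locally trivial fibration). I would state these lifting facts explicitly before the main estimate so that the convergence bookkeeping in the converse direction goes through without gaps.
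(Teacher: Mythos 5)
Your proof is correct and takes essentially the same route as the paper: in the converse direction you project to $M/H$, extract a convergent subsequence of the classes of the $g_k$, correct by elements of $H$ using the fibration $G\to G/H$, and conclude with properness of $H$ on $M$, which is exactly the paper's argument. The only difference is cosmetic: for the forward direction the paper simply cites Palais for properness of the induced $G/H$-action on $M/H$, whereas you sketch the standard lifting argument by hand.
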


\begin{proof}
Firstly, suppose that $G$ acts properly on $M$. Since $H$ is closed in $G$, the 
$H$-action on $M$ is proper and we get a smooth action of $G/H$ on the quotient 
manifold $M/H$. Properness of this latter action was shown 
in~\cite[Proposition~1.3.2]{P}.

Conversely, suppose that the actions of $H$ on $M$ and of $G/H$ on $M/H$ are 
proper. Let $(g_n)$ and $(p_n)$ be sequences in $G$ and $M$, respectively, such 
that $(g_n\cdot p_n,p_n)$ converges to $(q_0,p_0)$. We must show that $(g_n)$ 
has a convergent subsequence.

For this, note that $\bigl(g_nH\cdot[p_n],[p_n]\bigr)$ converges to 
$\bigl([q_0],[p_0]\bigr)$ in $M/H\times M/H$ where $[p]\in M/H$ denotes the
class of $p$ modulo $H$. Thus $(g_nH)$ has a convergent subsequence. Without 
loss of generality we assume that $g_nH\to g_0H$. Hence, there is a sequence 
$(h_n)$ in $H$ such that $h_n^{-1}g_n\to g_0$ in $G$, from which we conclude 
that
\begin{equation*}
\bigl( h_n\cdot(h_n^{-1}g_n\cdot p_n), h_n^{-1}g_n\cdot p_n\bigr)\to (q_0, 
g_0\cdot p_0).
\end{equation*}
Since $H$ acts properly on $M$, it follows that $(h_n)$ and hence $(g_n)$ have 
convergent subsequences.
\end{proof}

We first apply Lemma~\ref{Lem:PropernessinSteps} to the action of the center of 
$N_\Gamma^\mbb{C}$ on $\Sym(3,\mbb{C})$. The center of $N_\Gamma^\mbb{C}$ is 
given by all matrices of the form~\eqref{Eqn:MatrixGroup} having $a=b=0$ and 
acts on $\Sym(3,\mbb{C})$ by
\begin{equation*}
c\cdot\begin{pmatrix}
z_{11} & z_{12} & z_{13}\\
z_{12} & z_{22} & z_{23}\\
z_{13} & z_{23} & z_{33}
\end{pmatrix}=
\begin{pmatrix}
z_{11} & z_{12} & z_{13}\\
z_{12} & z_{22}-2c & z_{23}\\
z_{13} & z_{23} & z_{33}-2c
\end{pmatrix}.
\end{equation*}
Hence, the corresponding $\mbb{C}$-principal bundle is holomorphically trivial 
and given by
\begin{equation*}
\pi\colon\Sym(3,\mbb{C})\to\mbb{C}^5,\quad
\pi(z_{kl})=(z_{11},z_{12},z_{13},z_{22}-z_{33},z_{23}).
\end{equation*}
The induced action of $\mbb{C}^2\cong N_\Gamma^\mbb{C}/ Z(N_\Gamma^\mbb{C})$ on 
$\mbb{C}^5$ is given by
\begin{equation*}
(a,b)\cdot 
z=
\begin{pmatrix}
z_1\\
z_2+bz_1+a\\
z_3+az_1-b\\
z_4+2bz_2-2az_3+(b^2-a^2)z_1+2ab+2a\\
z_5+az_2+bz_3+abz_1+(a^2-b^2)/2
\end{pmatrix}.
\end{equation*}
In the next step we consider only the action of the one parameter group 
$\mbb{C}_a$ of elements $(a,0)$. A direct calculation shows that the 
$\mbb{C}_a$-action on $\mbb{C}^5$ becomes a translation in the first coordinate 
after conjugation by the biregular map $\Phi\colon\mbb{C}^5\to\mbb{C}\times 
\mbb{C}^4$,
\begin{equation*}
z\mapsto(z_2,z_1,z_3-z_1z_2,z_4+2z_2z_3-2z_1z_5-2z_2,z_2^2-2z_5).
\end{equation*}

It follows that $\mbb{C}_a$ acts properly on $\mbb{C}^5$ and that the 
corresponding $\mbb{C}_a$-principal bundle $\mbb{C}^5\to 
\mbb{C}^5/\mbb{C}_a\cong \mbb{C}^4$ is holomorphically trivial. Moreover, the 
induced $\mbb{C}_b$-action on $\mbb{C}^4$ is of the form
\begin{equation*}
b\cdot w=
\begin{pmatrix}
w_1\\
w_2-b(w_1^2+1)\\
w_3-2bw_1\\
w_4-2bw_2+b^2(w_1^2+1)\\
\end{pmatrix}.
\end{equation*}

Note that the projection onto the first three coordinates is equivariant; 
therefore we obtain a free algebraic $\mbb{C}$-action on $\mbb{C}^3$ of the form
$t\cdot w=\bigl(w_1,w_2+tf(w_1),w_3+tg(w_1)\bigr)$ where $f(w_1)=-(w_1^2+1)$ 
and $g(w_1)=-2w_1$. Due to a result of Kaliman, see~\cite{K}, every free 
algebraic $\mbb{C}$-action on $\mbb{C}^3$ is conjugate under a biregular map to 
a translation. Hence, it follows that $\mbb{C}_b$ acts properly on $\mbb{C}^3$ 
and thus on $\mbb{C}^4$. Moreover, we have $\mbb{C}^4/\mbb{C}_b\cong\mbb{C}^3$. 
Consequently, Lemma~\ref{Lem:PropernessinSteps} applies to show that 
$N_\Gamma^\mbb{C}$ acts properly on $\Sym(3,\mbb{C})$ with quotient $\mbb{C}^3$.

\begin{rem}
In our case it is possible to give an elementary proof of Kaliman's result which 
applies in a slightly more general setup. For this, let $f,g\in\mathcal{O} 
(\mbb{C})$ be two entire functions and consider the holomorphic 
$\mbb{C}$-action on $\mbb{C}^3$ given by $t\cdot 
z=\bigl(z_1,z_2+tf(z_1),z_3+tg(z_1)\bigr)$. This action is free if and only if 
$f$ and $g$ have no common zeros. In this case, there exist entire functions 
$\varphi,\psi\in\mathcal{O}(\mbb{C})$ such that $\varphi f+\psi g = 1$, 
see~\cite{Hel}. Then the biholomorphic map $\Phi\colon\mbb{C}^3\to\mbb{C}^3$ 
given by
\begin{equation*}
\Phi(z)=
\begin{pmatrix}
1 & 0 & 0\\
0 & f(z_1) & g(z_1)\\
0 & -\psi(z_1) & \varphi(z_1)\\
\end{pmatrix}
\begin{pmatrix}
z_1\\ z_2\\ z_3
\end{pmatrix}
\end{equation*}
transforms the $\mbb{C}$-action into a translation.
\end{rem}

\subsection{Open problems and concluding remarks}

Let $\Gamma\subset N$ be a discrete subgroup with Zariski closure $N_\Gamma$. 
As we have seen, in general the condition that only one $N_\Gamma$-orbit is 
totally real in $D$ does not imply that $D/\Gamma$ is Stein. It is natural to 
ask, however, whether $D/\Gamma$ is Stein if all $N_\Gamma$-orbits in $D$ are 
totally real. In this case, we obtain a free algebraic action of 
$N_\Gamma^\mbb{C}$ on the domain $N_\Gamma^\mbb{C}\cdot\wh{D}\subset\mbb{C}^n$.

As we have observed in the above examples, even if all $N_\Gamma$-orbits are 
totally real in $D$, the hypothesis of Proposition~\ref{Prop:suff} does not 
need to hold true. Then, in order to answer the above question, one could try a 
direct approach similar to the one carried out in the previous subsection. 
This, however, poses two major problems. First of all, one has to show that 
$N_\Gamma^\mbb{C}$ acts properly on 
$N_\Gamma^\mbb{C}\cdot\wh{D}\subset\mbb{C}^n$. Secondly, if $N_\Gamma^\mbb{C}$ 
does indeed act properly, one must prove that 
$\bigl(N_\Gamma^\mbb{C}\cdot\wh{D}\bigr)/N_\Gamma^\mbb{C}$ is a Stein manifold.
These questions are far from being trivial, bearing in mind that there is 
an example of a proper algebraic $\mbb{C}^2$-action on $\mbb{C}^6$ by 
affine-linear transformations such that $\mbb{C}^6/\mbb{C}^2$ is quasi-affine 
but not Stein, see~\cite{Win}.

\end{document}